\documentclass[a4paper, 12pt, twoside]{amsart}

\usepackage{graphicx}
\usepackage{mathtools, yhmath, forloop}
\usepackage[all]{xy}
\usepackage{multirow} 
\usepackage{hyperref}
\usepackage{tikz}
\usepackage{amsmath,amsfonts,amssymb,amsthm, amscd}
\usepackage{enumitem} 
\usepackage{thmtools} 
\usepackage{latexsym}
\usepackage[utf8]{inputenc}
\usepackage[english]{babel}

 \divide\oddsidemargin by 2
\makeatletter
\@namedef{subjclassname@2020}{%
  \textup{2020} Mathematics Subject Classification}
\makeatother

\newcommand{\set}[2]{\lbrace #1 \mid #2 \rbrace}
\newcommand{\PC}[1]{\mathbf{P}_{#1}{\mathbb{C}}}
\newcommand{\interior}[1]{\mathring{#1}}

\newcommand{\CC}{\mathbb{C}}
\newcommand{\Chat}[1]{\widehat{\CC^{#1}}}
\newcommand{\cF}{\mathcal{F}}

\newcommand{\e}{\varepsilon}

\newcommand{\F}{\mathcal{F}}

\newcommand{\Hol}{\operatorname{Hol}}

\newcommand{\J}{\mathcal{J}}

\newcommand{\NN}{\mathbb{N}}
\newcommand{\N}{\mathbb{N}}

\newcommand{\RR}{\mathbb{R}}

\newcommand{\ZZ}{\mathbb{Z}}

\newtheorem{theorem}{Theorem}[section]
\newtheorem{corollary}[theorem]{Corollary}
\newtheorem{lemma}[theorem]{Lemma}

\theoremstyle{definition}
\newtheorem{definition}[theorem]{Definition}
\newtheorem{remark}[theorem]{Remark}
\newtheorem{example}[theorem]{Example}

\title[Non-bulging Fatou components]{Non-bulging Fatou components\\ for transcendental skew-products}
\author{Tom Potthink}
\address{Tom Potthink, Univ. Bordeaux, CNRS, Bordeaux INP, IMB, UMR 5251, F-33400 Talence, France}
\email{tom.potthink@math.u-bordeaux.fr}

\author{Jasmin Raissy}

\address{Jasmin Raissy, Univ. Bordeaux, CNRS, Bordeaux INP, IMB, UMR 5251, F-33400 Talence, France \& Institut Universitaire de France (IUF)}
\email{jasmin.raissy@math.u-bordeaux.fr}

\subjclass[2020]{Primary 37F80; Secondary 30D05, 32H50, 37F10}
\keywords{Transcendental entire functions, skew-products, Fatou set, Wandering domains}

\begin{document}

	\begin{abstract}
		In this paper, we investigate the bulging of escaping or oscillating Fatou components on invariant fibers for general skew-products, with a focus on the dependence on the perturbation. We show that any orbitally unbounded component is non-bulging for an appropriate choice of perturbation, whereas sufficiently well-behaved perturbations can render it bulging when the fiber is attracting. Our results highlight that bulging is influenced by more than just the dynamics on the fiber and in the one-dimensional coordinate, contrasting sharply with established results for non-escaping Fatou components.
	\end{abstract}
    \maketitle
	
	

\section{Introduction} \label{SECTION:1}

A central goal of complex dynamics is the study of the Fatou and Julia sets, which correspond, respectively, to the sets of stable and unstable behavior of the iterates of a holomorphic function. 

While the dynamics of entire functions in one complex variable is by now well understood, even simple extensions to higher dimensions, such as skew-products, exhibit fundamentally new phenomena that are not yet fully explored.

In dimension one, the types of components that constitute the Fatou set of an entire function are nearly completely classified. The basic classification was first established by Fatou and later further developed and completely proven by several authors (Julia, Leau, Siegel, Herman...). Furthermore, more recently, progress has been made to better understand wandering domains (cf. \cite{Beni2019} and \cite{ferreira2024classifyingmultiplyconnectedwandering}). In higher dimensions, however, no similarly complete understanding of Fatou components exists. A natural way to bridge one- and two-dimensional dynamics is to consider maps with a partially decoupled structure, where one coordinate evolves independently.

In this paper, we study two-dimensional Fatou components of \emph{skew-products}, i.e., maps $F\colon\CC^2\to\CC^2$ of the form
\begin{equation*}
    F(z,w) = (f(z,w), g(w)).
\end{equation*}
Skew-products serve as a natural stepping stone from one-dimensional to two-dimensional dynamics, since they often allow the reduction of two-dimensional behavior to one-dimensional analysis. Of particular interest is the behavior near an invariant fiber $\{w=c\}$, as on the fiber itself, the skew-product reduces to a one-dimensional function. In fact, the interplay between the dynamics on invariant fibers and the full two-dimensional system raises subtle questions about stability and the structure of Fatou components. This setting has been explored, for example, in \cite{Lilov_2004} for polynomial skew-products and \emph{attracting} fibers, that is, fibers $\{w=c\}$ where $c$ is an \emph{attracting} fixed point  of $g$ (i.e., $g(c)=c$ and $|g'(c)|<1$). 

Without loss of generality, we assume throughout that $c = 0$ and we write the skew-product $F$ as
\begin{equation*}
    F(z,w) = (f(z) + r(z,w), g(w))
\end{equation*}
where the holomorphic function $f$ gives the one-dimensional action of $F$ on the invariant fiber and $r\colon\CC^2\to\CC$ is a holomorphic map such that $r(z,0)=0$. We refer to $r$ as the \emph{perturbation} or \emph{error term}, since it measures the deviation from the one-dimensional dynamics on the fiber.  

Since our work concerns the Fatou set in higher dimensions, it is necessary to discuss which definition of normality is used. For polynomial skew-products, the choice of normality is clear; however, this is not the case in the transcendental setting. Building on the discussion in \cite{Arosio2019}, several viable extensions of the one-dimensional notion exist, depending on which compactification for $\CC^2$ is used. In this paper, we consider three options: the one-point compactification $\Chat{2}$, the projective space $\PC{2}$, and the product space $\Chat{} \times \Chat{}$. We show in \Autoref{SECTION:2} that for our results the choice of definition is immaterial.

Geometrically, when investigating the dynamics near an invariant fiber it is natural to study whether a stable one-dimensional region persists when embedded in the full two-dimensional system. More precisely, one may ask: when does a one-dimensional Fatou component extend to a two-dimensional one? In particular, we seek to understand whether all one-dimensional Fatou components of $f$ on the invariant fiber \emph{bulge}, that is, extend to two-dimensional Fatou components of $F$.  Bulging creates a direct link between the dynamics on the fiber and the full skew-product near the fiber, potentially reducing the analysis to the well-understood one-dimensional case. This setting and related phenomena have been studied extensively; see, e.g., \cite{Lilov_2004}, \cite{Peters_Vivas_2016}, \cite{Peters_Smit_2017}, \cite{Ji_2019}, and \cite{Ji_Shen_2025} (see also \cite{Raissy_2016} for an overview). In particular, Lilov showed in \cite{Lilov_2004} that periodic one-dimensional Fatou components with bounded orbits on an \emph{attracting} invariant  fiber always bulge. 

In this paper, we focus on \emph{escaping} or \emph{oscillating} Fatou components on the fiber, that is, components for which at least a subsequence of iterates tends uniformly to infinity. This includes \emph{escaping} and \emph{oscillating wandering domains}, \emph{Baker domains}, and \emph{basins of attraction at infinity}, (these last basins can arise if the skew-product restricts to a polynomial on the fiber). In these cases, we show that the bulging of a component depends sensitively on the perturbation, reflecting the fact that the perturbation may evolve as the orbit escapes to infinity. 

\eject
Our main result shows the existence of perturbations preventing stable behavior around points with unbounded orbits. The main idea is to construct a perturbation that forces nearby orbits to exhibit incompatible behaviors, some escaping to infinity while others remain bounded, thereby destroying normality.


\begin{theorem}\label{THEOREM:NONBULGING}
    Let $f$ and $g$ be nonconstant entire functions with $g(0) = 0$. Furthermore, assume that there exists $z_0 \in \CC$ with $f^{n_k}(z_0) \to \infty$ as $k\to\infty$ for some subsequence of iterates. Then there exists $h$ entire such that the point $(z_0, 0)$ belongs to the Julia set of the skew-product $F\colon \CC^2 \to \CC^2$ defined as
    \begin{equation}\label{eq:THEOREMNONBULG:Skew}
         F(z,w) =(f(z) + wh(z), g(w)).
    \end{equation}
\end{theorem}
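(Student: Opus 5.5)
We construct $h$ so that nearby orbits starting close to $(z_0,0)$ behave incompatibly: along the fiber, $F^{n_k}(z_0,0)=(f^{n_k}(z_0),0)\to\infty$, while at points $(z_0,w)$ with suitable small $w\neq 0$ the perturbation $wh(z)$ pushes the orbit into a fixed bounded region at time $n_k$. Then no subsequence of $\{F^{n}\}$ can converge locally uniformly near $(z_0,0)$, in any of the compactifications of \Autoref{SECTION:2}. Indeed, a limit would equal $\infty$ at $(z_0,0)$ but be finite, in fact in a fixed compact set, at points $(z_0,w_k)$ with $w_k\to 0$. This is impossible by equicontinuity, since a neighbourhood of $\infty$ and a fixed compact set are disjoint.

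\textbf{Case 1: $g(w)\neq 0$ for small $w\ne 0$.} We write $z_n=f^n(z_0)$ and pass to a subsequence so that the points $z_{n_k-1}$ are distinct, with $|z_{n_k-1}|\to\infty$ and $|z_{n_k}|$ increasing rapidly. This is possible because $f(z_{n_k-1})=z_{n_k}\to\infty$ forces $z_{n_k-1}\to\infty$ up to subsequence. If $z_0$ is preperiodic we may discard finitely many indices; the orbit cannot be periodic since it is unbounded.

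We choose points $w_k\to 0$, $w_k\neq 0$, such that the orbit $w_k, g(w_k),\dots$ stays small. If $0$ is attracting or indifferent this is easy. If $0$ is repelling we use backward branches: pick $w_k$ with $g^{n_k-1}(w_k)=u_k$ for a prescribed small $u_k\neq 0$, using a local inverse of $g$ near $0$. In general we argue via a local analysis of $g$ near $0$.

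We then build $h$ by an inductive interpolation (Mittag-Leffler/Weierstrass type) together with a Runge-type approximation, and require two things. First, $|h|$ is tiny on large discs $D_k$ containing the earlier orbit points $z_0,\dots,z_{n_{k-1}}$ and their perturbations. Second, $h$ is prescribed at the single point $\zeta_k$ near $z_{n_k-1}$ which is the actual position of $F^{n_k-1}(z_0,w_k)$, so that
\[ f(\zeta_k)+g^{n_k-1}(w_k)h(\zeta_k)=0. \]
The whole construction is designed to meet this target condition. Concretely, we take $h=\sum_k \e_k P_k$ with entire $P_k$ that are small on $D_{k-1}$ and hit the required value at the $k$-th step. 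Since the perturbation earlier in the orbit is tiny, $\zeta_k$ stays close to $z_{n_k-1}$.

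\textbf{Case 2: $g(w)=0$ for small $w\ne 0$.} If $g$ vanishes at $w\ne0$ near $0$, this is impossible for nonconstant $g$ except at isolated points. So Case 1 is the only case.

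\textbf{Main obstacle.} The hardest step is the self-consistency: the prescribed value of $h$ at step $k$ must not disturb the orbits already arranged at steps $j<k$. The orbit points $\zeta_k$ themselves depend on $h$, so fixing one step can shift the others. We plan to handle this by a fixed-point/continuity argument. At each step we adjust a parameter $a_k$ in $h_k=h_{k-1}+a_kP_k$, where $P_k$ is $\e_k$-small on $D_{k-1}$. Because $\sum\e_k$ is small, the orbit positions up to time $n_{k-1}$ move by arbitrarily little, so the earlier conditions persist up to a small error; an open condition such as landing in $\{|z|<1\}$ suffices, so this error is harmless. For the condition at the new step, the map $a_k\mapsto F^{n_k}(z_0,w_k)$ is a nonconstant polynomial in $a_k$ (affine in $a_k$ through the last step). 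Hence it attains the value $0$, giving the required landing.
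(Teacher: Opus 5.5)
Your normality argument is sound. Using the fiber point itself as the escaping witness, $F^{n_k}(z_0,0)=(z_{n_k},0)\to\infty$, is a genuine simplification: the paper prescribes $h$ near $z_{n_k}$ and lands at time $n_k+1$, where $z_{n_k+1}$ need not escape, so it needs a second sequence of points pushed into $D(k+1,1)$. The gap is in the construction of $h$, which is the heart of the proof. Your increment $a_kP_k$ is only controlled on $D_{k-1}$, i.e.\ near $z_0,\dots,z_{n_{k-1}}$. Before reaching $\zeta_k$, however, the orbit of $(z_0,w_k)$ also passes near $z_{n_{k-1}+1},\dots,z_{n_k-2}$, which can lie anywhere (the orbit may oscillate), even arbitrarily close to $z_{n_k-1}$. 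Hence $a_k\mapsto\pi_zF^{n_k}(z_0,w_k)$ is not ``affine through the last step''. That would hold only if $P_k$ vanished \emph{exactly} at all earlier positions of that orbit; otherwise it is an iterated composition of $f$, $h_{k-1}$ and $P_k$, in general a transcendental entire function of $a_k$. More seriously, nothing bounds the root $a_k$. The required value $h(\zeta_k)=-f(\zeta_k)/g^{n_k-1}(w_k)$ is huge because $w_k$ is tiny, and the change on $D_{k-1}$ is $|a_k|\e_k$, not $\e_k$. Since $\e_k$ and $P_k$ are fixed before $a_k$ is found, ``$\sum\e_k$ small'' does not protect the earlier landings, and the argument is circular. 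Finally, to protect step $k$ from later steps, later increments must be small near \emph{every} position the $(z_0,w_k)$-orbit visits before time $n_k$, including $\zeta_k$, not only near $z_0,\dots,z_{n_{k-1}}$.

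The paper avoids all this without any fixed-point step. It takes $n_k$ with $|z_j|<|z_{n_k}|$ for all $j<n_k$, so the disc $H_k=\overline{D(0,|z_{n_k}|-2\delta_k)}$ contains every position before time $n_k$, while a small disc $D_k$ around $z_{n_k}$ is disjoint from it. It fixes $w_k$ first. It then picks $R_k$ so that $f$ varies by less than $1/3$ on the disc $\Delta_k$ of radius $R_k$ around the time-$n_k$ position, and picks $\delta_k$ by continuity in $h$ so that any $\widetilde h$ with $\max_{H_k}|\widetilde h-h_{k-1}|<\delta_k$ keeps that position in $\Delta_k$. Runge's theorem then gives $h_k$ close to $h_{k-1}$ on $H_k$ and close to the required \emph{constant} on all of $\Delta_k$, with later increments bounded on $H_k$ by $\min\{\delta_0,\dots,\delta_k\}/2^{k+1}$. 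So the required value is achieved approximately on a whole disc guaranteed to contain the true orbit point for the limit $h$, and the target is an open condition. If you keep your shift (prescribe near $z_{n_k-1}$, land at time $n_k$), note that $z_{n_k-1}$ need not be a record, i.e.\ need not satisfy $|z_{n_k-1}|>|z_j|$ for $j<n_k-1$; for an oscillating orbit, every record could be followed by a point in a fixed bounded set. So a single disc will not do. You must protect the intermediate positions by small disjoint discs in the Runge set, or make $P_k$ vanish exactly there; the latter is what would make the dependence on $a_k$ genuinely affine, with $a_k$ known in advance. Your case analysis on the type of the fixed point $0$ is unnecessary: for fixed $k$, continuity of $g^j$ at $0$ for $j\le n_k$ gives $w_k$ as small as needed, and $g^{n_k-1}(w_k)\neq 0$ off a discrete set.
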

\smallskip

    Observe that, for normality with respect to the compactifications $\PC{2}$ or $\Chat{2}$, but not $(\Chat{})^2$, the fact that $0$ is in the Julia set of $g$ does not automatically imply that $(z_0,0)$ is in the Julia set of $F$. In particular, our result remains nontrivial even when $0$ is in the Julia set of $g$.

As an immediate consequence, we obtain the following result about the existence of non-bulging Fatou components.

\begin{corollary}\label{COROLLARY:NONBULGING}
    Let $f$ and $g$ be nonconstant entire functions with $0$ a fixed point of $g$ belonging to the Fatou set of $g$, and $U$ a Fatou component of $f$. Furthermore assume that there exists $z_0 \in U$ with $f^{n_k}(z_0) \to \infty$ as $k\to\infty$ for some subsequence of iterates.
    Then there exists $h$ entire such that the Fatou component $U$ does not bulge for the skew-product $F$ as defined in \eqref{eq:THEOREMNONBULG:Skew}.
\end{corollary}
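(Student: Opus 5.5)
The plan is to derive the corollary directly from \Autoref{THEOREM:NONBULGING}, arguing by contradiction. The hypotheses of the corollary contain those of the theorem: $f$ and $g$ are nonconstant entire, $g(0)=0$, and there is a point $z_0$ with $f^{n_k}(z_0)\to\infty$. So we first apply \Autoref{THEOREM:NONBULGING} to obtain an entire function $h$ such that $(z_0,0)$ lies in the Julia set of the skew-product $F(z,w)=(f(z)+wh(z),g(w))$ of \eqref{eq:THEOREMNONBULG:Skew}. By \Autoref{SECTION:2}, this conclusion does not depend on which of the compactifications $\Chat{2}$, $\PC{2}$ or $\Chat{}\times\Chat{}$ is used to define normality. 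We fix this $h$ for the rest of the argument.

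Next we unwind the definition of bulging. Saying that $U$ bulges for $F$ means that there is a Fatou component $\Omega$ of $F$ with $U\times\{0\}\subset\Omega$; equivalently, $\Omega\cap\{w=0\}$ contains $U\times\{0\}$ as an open piece of the fiber. Here the hypothesis that $0$ is a fixed point of $g$ in the Fatou set of $g$ matters. It guarantees that the question is not trivially settled by the dynamics in the $w$-coordinate: it rules out the case where every point of the fiber is Julia merely because $0\in J(g)$, which is the situation noted after the theorem for $(\Chat{})^2$. So the non-bulging we obtain genuinely comes from the choice of perturbation.

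Now suppose, for contradiction, that $U$ bulges, with $\Omega$ as above. Since $z_0\in U$, we get $(z_0,0)\in U\times\{0\}\subset\Omega$, so $(z_0,0)$ lies in the Fatou set of $F$. This contradicts the choice of $h$, so $U$ does not bulge.

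The main potential obstacle is purely definitional. We must check that the paper's notion of bulging really requires the whole of $U\times\{0\}$, and hence the specific point $(z_0,0)$, to lie in the two-dimensional Fatou set. A weaker notion, such as some Fatou component meeting $U\times\{0\}$ in a nonempty open subset, would not suffice by itself. If the weaker notion is the one in force, we would fix this as follows. The Julia set of $F$ is completely invariant and closed, and any point $z\in U$ also has $f^{n_k}(z)\to\infty$, since $U$ is a Fatou component on which $f^{n_k}\to\infty$ locally uniformly. The construction in \Autoref{THEOREM:NONBULGING} should then place every point of $U\times\{0\}$ in the Julia set. At the very least it should produce an $h$ that works for a countable dense set of base points, from which the conclusion follows because the Julia set is closed.
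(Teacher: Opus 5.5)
Your argument is correct and is exactly the paper's: the corollary is an immediate consequence of Theorem~\ref{THEOREM:NONBULGING}, because Definition~\ref{DEF:Bulging} requires all of $U\times\{0\}$, and hence $(z_0,0)$, to lie in the Fatou set of $F$. So your fallback paragraph for a weaker notion of bulging is not needed, and it is better dropped anyway: in this higher-dimensional setting the Julia set need not be completely invariant (only backward invariance of the Fatou set holds), and the appeal to what the construction ``should'' give is not a proof.
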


This statement holds for escaping wandering domains, oscillating wandering domains, Baker domains, and basins of attraction at infinity on either an attracting fiber or an invariant fiber in a Siegel disk. 
Escaping and oscillating domains arise only in the transcendental setting, thus our focus here is mainly on transcendental skew-products. The only exception is when the function reduces to a polynomial on the fiber, which always has a basin at infinity. In this case, the construction still applies and the resulting skew-product is in general non-polynomial. However, the basin at infinity does not need to bulge for polynomial perturbations, as shown in \Autoref{EXAMPLE:basininfinity}. This provides the only case where a polynomial skew-product has a non-bulging Fatou component on an attracting fiber, showing that basins at infinity for polynomial skew-products can be non-bulging. 

Our constructions constitute the first known example of non-bulging wandering domains and the second example of non-bulging Fatou components on attracting fibers in general, we achieved a similar construction for Baker domains in \cite{Bakerpaper_2025}, in collaboration with Benini. 

On the other hand, we also show that for attracting invariant fibers, wandering domains can bulge in non-trivial settings and we give a sufficient condition for bulging for a particular class of functions with wandering domains. These results are described in \Autoref{THEOREM:BULGINGORDER} and \Autoref{COROLLARY:GROWTH}. 

These results highlight a key difference with periodic Fatou components with bounded orbits: for orbitally unbounded components, the behavior depends sensitively on the interaction between the base dynamics and the perturbation.

\subsection*{Outline of the paper}
In \Autoref{SECTION:2} we review notions of normality in higher dimensions and present other preliminaries. In \Autoref{SECTION:4} we give the general construction of perturbations that prevent bulging proving \Autoref{THEOREM:NONBULGING}, while in \Autoref{SECTION:3} we study the bulging of wandering domains near an attracting invariant fiber, proving \Autoref{THEOREM:BULGINGORDER} and \Autoref{COROLLARY:GROWTH}.

\subsection*{Notation} 
We denote by $\pi_z\colon \CC^2 \to \CC, \pi_z(z,w) = z$ and $\pi_w\colon \CC^2 \to \CC, \pi_w(z,w) = w$ the projections onto the $z$ and $w$ coordinates respectively. 
Additionally, where there is no risk of confusion, we write $(z_k, w_k) := F^{k}(z,w)$ for $k \in \NN$, $(z,w) \in \CC^2$, and $F\colon \CC^2 \to \CC^2$.
Lastly, we use $\NN$ to denote the natural numbers including $0$.

\subsection*{Acknowledgments} We wish to thank Anna Miriam Benini for useful discussions. This project was supported in part with funding from the ANR PADAWAN /ANR-21-CE40-0012-01, ANR TIGerS/ANR-24-CE40-3604, the Institut Universitaire de France (IUF) and the French Italian University and Campus France through the PHC Galileo program, under the project ``From rational to transcendental: complex dynamics and parameter spaces''.


\section{Preliminaries}\label{SECTION:2}

First, we recall the following definitions and statements from one-dimensional dynamics.

    Let $f\colon \CC \to \CC$ be an entire function. 
    The \emph{Fatou set} of $f$ is defined as the set of normality of the iterates $\lbrace f^k \mid k \in \NN \rbrace$.
    The components of the Fatou set are called \emph{Fatou components} of $f$. Given $U \subset \CC$ a Fatou component of the entire function $f$, $f^n(U)$ is contained in some Fatou component $U_n$ of $f$ for every $n \in \NN$, and we call $U$
    \begin{enumerate}
        \item \emph{periodic} if there exists $n \in \NN$ such that $U = U_n$, and \emph{invariant} if this holds for $n = 1$,
        \item \emph{preperiodic} if $U_n = U_m$ for some $n \neq m$, and
        \item \emph{wandering} if $U_n \neq U_m$ for all $n \neq m$.
    \end{enumerate}

Fatou Classification Theorem gives a complete classification for (pre-)periodic Fatou components. In particular, if $U$ is an invariant Fatou component of $f$, then either:
        \begin{enumerate}
            \item[(i)] $U$ is the \emph{immediate attractive basin} of some $z_0 \in U$. In particular, it holds that $\lvert f'(z_0) \rvert < 1$, or
            \item[(ii)] $U$ is the \emph{parabolic domain} of some $z_0 \in \partial U$, or
            \item[(iii)] $U$ is a \emph{rotation domain}, that is, either a \emph{Siegel disk} around an elliptic fixed point $z_0 \in U$ or an \emph{Herman ring}, or
            \item[(iv)] $U$ is a \emph{Baker domain}, that is, $f^{n}(z) \to \infty$ for all $z \in U$ as $n \to \infty$, but $f(\infty)$ is not well-defined.         \end{enumerate}

Recall that only transcendental entire functions can have Baker domains.

The Non Wandering Domain Theorem due to  Sullivan \cite{sul}
states that for rational functions of degree $d\ge 2$ every Fatou component  is  eventually periodic. Therefore, for rational functions of degree $d\ge 2$, we have a complete description  of the  dynamics in the Fatou set.  On the other hand, prior to Sullivan's result, the first examples of an entire function with a wandering domains were found by Herman and published in~\cite{Baker1984}.
The well-understood one-dimensional dynamics of such functions, similar to $f(z) = z + 2 \pi i + e^z$, involve a wandering domain $U_0$ around an attracting fixed point $z_0$ of an associated function as described in the following lemma, that we shall use in \Autoref{SECTION:3} to find a class of skew-products with bulging wandering domains.

\begin{lemma}[{\cite[Section 5]{Baker1984}}]\label{LEMMA:WDExamplesin1D}
    Let $T \in \CC^*$, $p$ a $ T $-periodic entire function, and $f(z) = z + p(z) + T$.
    If the entire function $z + p(z)$ has an attracting fixed point $z_0$ with corresponding basin of attraction $U_0$, then the sets $U_n := U_0 + nT$, $n\in\ZZ$, are pairwise disjoint Fatou components of $f$ satisfying $f(U_n) \subset U_{n+1}$. In particular, they form a sequence of wandering domains for $f$.
\end{lemma}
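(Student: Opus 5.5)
Since the last statement is \Autoref{LEMMA:WDExamplesin1D}, a classical result of Baker, the plan is the standard argument from \cite[Section 5]{Baker1984}. Set $q(z) := z + p(z)$. Periodicity of $p$ gives the conjugacy identity
\[
f(z) = q(z) + T, \qquad f(z+T) = f(z) + T, \qquad q(z+T) = q(z)+T .
\]
By induction, $f^n(z) = q^n(z) + nT$ for all $n$. Indeed, $f^{n+1}(z) = f(q^n(z)+nT) = q(q^n(z)) + nT + T$, using that $f$ and $q$ commute with translation by $T$.

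The key steps, in order, are as follows.

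\textbf{Step 1.} The Fatou sets of $f$ and $q$ coincide. The family $\{f^n\}$ differs from $\{q^n\}$ by the translations $z\mapsto z+nT$. On a domain where $q^n$ converges locally uniformly along a subsequence, either to a holomorphic limit or to $\infty$, the family $\{q^n + nT\}$ is normal. For a finite limit, $q^{n_k}+n_kT$ tends to $\infty$ uniformly on compacta, or converges after passing to a further subsequence along which $n_kT$ converges. Since $T\neq0$, $n_kT\to\infty$, so the first case always holds. The symmetric argument, writing $q^n = f^n - nT$, gives the converse. Both directions thus reduce to checking normality of a family shifted by constants. The uniform dichotomy (bounded versus escaping on compacta) is the only subtle point, and I expect it to be the main obstacle. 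One may sidestep it by using Montel's theorem instead: a family is normal at $z$ if and only if its translate by constants $c_n$ is. This follows because the spherical-metric equicontinuity of $\{q^n\}$ near points where the values stay bounded transfers to $\{q^n + nT\}$, since translations are Euclidean isometries. Near points where the values go to $\infty$, one uses that $q^n\to\infty$ locally uniformly there while $|nT|$ grows only linearly. If this becomes messy, I will instead argue via the Julia set: $J$ is the closure of repelling periodic points in the transcendental case. Since $q^n(z)=z+mT$ implies $f^n(z) = z+(m+n)T$, I will work modulo $T$, or use the blow-up characterisation of $J$.

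\textbf{Step 2.} $T$-periodicity of $q-\mathrm{id}$ implies that the Fatou set is invariant under $z\mapsto z+T$. Hence each $U_n = U_0+nT$ is the basin of the attracting fixed point $z_0+nT$ of $q$, and therefore a Fatou component of $q$, hence of $f$.

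\textbf{Step 3.} The $U_n$ are pairwise disjoint, because they are basins of distinct attracting fixed points of $q$. Finally, for $z\in U_0$, $f(z+nT)=q(z)+T + nT \in U_0+(n+1)T$, using $q(U_0)\subset U_0$. Hence $f(U_n)\subset U_{n+1}$. Since the $U_n$ are distinct and $f^m(U_n)\subset U_{n+m}$, no component is eventually periodic, so the $U_n$ are wandering domains.
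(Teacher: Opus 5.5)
\textbf{The gap is in Step 1}, in exactly the case you flag: a subsequence along which $q^{n_k}\to\infty$ locally uniformly. Nothing in your argument controls $q^{n_k}+n_kT$ there, because the two terms can cancel to leading order. The principle you propose to fall back on, that a family is normal if and only if its translate by constants $c_n$ is, is false. For example, $g_n(z)=n+nz$ satisfies $|g_n|\ge n/2$ on $D(0,\tfrac12)$, so $g_n\to\infty$ uniformly there, yet $g_n-n=nz$ is not normal at $0$. The remark that $|nT|$ grows only linearly does not help, since nothing forces $q^n$ to escape faster than linearly.

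The route via repelling periodic points is only gestured at. It can be made to work: if $f^n(z)=z$ with $|(f^n)'(z)|=|\lambda|>1$, then $q^{kn}(z)=z-knT$, and $(q^{kn})'(z)=\lambda^k$ because $(q^n)'$ is $T$-periodic. Hence the spherical derivatives of $q^{kn}$ at $z$ blow up, and Marty's theorem gives $z\in J(q)$. As written, however, this is not a proof, and it rests on the deep density theorem for repelling cycles.

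Both inclusions are genuinely needed. You need $F(q)\subset F(f)$ to place $U_n$ in the Fatou set of $f$. You need $F(f)\subset F(q)$ to know $\partial U_n\subset J(f)$, i.e.\ that $U_n$ is a whole component of $F(f)$ and not part of a larger one.

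\textbf{An elementary fix} avoids limit functions altogether:
\begin{enumerate}
\item Since $q^m(z+T)=q^m(z)+T$, normality is preserved under $z\mapsto z\pm T$, so $J(q)\pm T=J(q)$.
\item Complete invariance of $J(q)$ under $q$ then gives $f^{-1}(J(q))=q^{-1}(J(q)-T)=q^{-1}(J(q))\subset J(q)$. Hence every $f^m$ maps $F(q)$ into $F(q)$.
\item On $F(q)$ the family $\{f^m\}$ therefore omits the infinitely many points of $J(q)$. Note that $p$ must be nonconstant for $q$ to have an attracting fixed point, so $q$ is transcendental and $J(q)$ is infinite. Montel's theorem yields $F(q)\subset F(f)$.
\item Exchanging roles, using $f^m(z+T)=f^m(z)+T$ and $q=f-T$, gives $q^{-1}(J(f))=f^{-1}(J(f)+T)=f^{-1}(J(f))\subset J(f)$. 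The same argument yields $F(f)\subset F(q)$.
\end{enumerate}
With $F(f)=F(q)$ established, your Steps 2 and 3 go through as written. For comparison, the paper itself gives no proof of this lemma and simply cites Baker.
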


We now discuss normality in the higher-dimensional setting. While in one dimension, it is natural to consider the compactification of $\CC$ given by the Riemann sphere $\widehat{\CC}$, the one point compactification of $\CC^2$ is not a complex manifold, and there are several natural compactifications of $\CC^2$ that are complex manifolds (see \cite{Morrow} and \cite{Brenton} for example). 
Several definitions of normality have been used in higher-dimensional dynamics.
Here, we consider definitions based on either the one-point compactification $\Chat{n} := \CC^n \cup \lbrace \infty \rbrace$, as first presented in \cite{wu1967normal}, the projective space $\PC{n}$, as in \cite{Arosio2019}, or $(\Chat{})^n = \Chat{} \times \dots \times \Chat{} $.
The last is of interest since it extends the dynamics of direct products in an intuitive manner.

\begin{definition}[{\cite[Definition 1.1]{wu1967normal}}]
    Let $X$ be a complex manifold and let $n\in\N$. 
    A family $ \cF \subset \Hol(X, \CC^n) $ of holomorphic maps is called \emph{$\Chat{n}$-normal} if every sequence in $\cF$ is either uniformly divergent to infinity on compact subsets or has a subsequence converging uniformly on compact subsets to a limit in $\Hol(X, \CC^n)$.
\end{definition}

\begin{definition}[{\cite[Definition 2.1]{Arosio2019}}]
    Let $X$ be a complex manifold and let $n\in\N$. 
    A family $ \cF \subset \Hol(X, \CC^n) $ of holomorphic maps is called \emph{$\PC{n}$-normal} if every sequence in $\cF$ has a subsequence converging uniformly on compact subsets to a limit in $\Hol(X, \PC{n})$.
\end{definition}


\begin{definition}
	Let $X$ be a complex manifold and let $n\in\N$. 
    A family $ \cF \subset \Hol(X, \CC^n) $ of holomorphic maps is called \emph{$(\Chat{})^n$-normal} if every sequence in $\cF$ has a subsequence converging uniformly on compact subsets to a limit in $\Hol(X, (\Chat{})^n)$.
\end{definition}


Using the definitions above, the Fatou set of a holomorphic function in higher dimensions is defined as the set where the iterates form a normal family.

\begin{definition}
    Let $F\colon \CC^n \to \CC^n $ be a holomorphic function for some $n \in \N$. We define the $\Chat{n}$-, $\PC{n}$-, and $(\Chat{})^n$-\emph{Fatou set} of $F$ as the respective sets of normality of the iterates $(F^n)_{n\in\NN}$. 
    The corresponding Julia sets are defined as the complements.
\end{definition}

The Fatou sets in higher dimensions share some properties with their one-dimensional counterpart, though not all. 
While the Fatou set remains backward invariant, a property which we will use later, it is no longer forward invariant. 
For $\Chat{n}$, the lack of forward invariance was discussed in \cite[Proposition 2.4]{Fornss1998FatouAJ}. 
The proof of backward invariance follows the same argument as in the one-dimensional case.

\begin{lemma}\label{LEMMA:BackwardInvariance}
    For any holomorphic map $F\colon \CC^n \to \CC^n$,  the Fatou set is backward invariant:
    \begin{equation*}
        F^{-1}(\F(F)) \subset \F(F).
    \end{equation*} 
\end{lemma}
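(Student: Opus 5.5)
The plan is to show directly that if $p=(z,w)$ lies in the Fatou set, then every point of $F^{-1}(p)$ does too. Take $q$ with $F(q)=p$ and let $V$ be a neighbourhood of $p$ on which $(F^n)_{n\in\NN}$ is normal in the chosen sense ($\Chat{n}$, $\PC{n}$ or $(\Chat{})^n$). By continuity of $F$ I choose a neighbourhood $W$ of $q$ with $F(W)\subset V$. On $W$ the iterates factor as $F^{n+1}|_W=F^n|_V\circ F|_W$. Since the family $(F^n)$ is normal near $q$ exactly when the shifted family $(F^{n+1})$ is (adding or removing one map does not affect normality), it suffices to check that precomposition with the fixed holomorphic map $F|_W\colon W\to V$ preserves normality.

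I handle the three notions separately, though the argument is the same each time. Take any sequence $(n_j)$. For $\PC{n}$ and $(\Chat{})^n$, normality on $V$ gives a subsequence with $F^{n_j}\to\phi$ uniformly on compact subsets of $V$, with $\phi$ holomorphic into the compactification. For any compact $K\subset W$, the image $F(K)$ is compact in $V$. Hence $F^{n_j}\circ F\to \phi\circ F$ uniformly on $K$, and $\phi\circ F$ is again holomorphic into the same target. For $\Chat{n}$ there is one extra case: a subsequence may instead diverge uniformly to infinity on compact subsets of $V$. In that case, for each compact $K\subset W$, $F^{n_j}$ leaves every compact set of $\CC^n$ uniformly on $F(K)$, so $F^{n_j+1}$ does so uniformly on $K$. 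Either way, every sequence of iterates on $W$ has a subsequence that converges or diverges uniformly on compacta, so $W$ is contained in the Fatou set.

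The main obstacle is only bookkeeping. Normality on $V$ must be read as local normality on some neighbourhood, and one must note that uniform convergence on compacta of maps into a compact metric target is preserved under precomposition by a continuous map sending compacta to compacta. I do not need openness of $F$ or any invertibility, which is consistent with the remark that forward invariance fails while backward invariance holds.
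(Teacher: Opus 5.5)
Your proposal is correct. It is exactly the standard one-dimensional argument that the paper invokes without writing out: pull back a neighbourhood $V$ of $F(q)$ to $W=F^{-1}(V)$, write $F^{n+1}|_W=F^n|_V\circ F|_W$, and note that precomposition with the continuous map $F$ preserves both locally uniform convergence (to a holomorphic limit in $\Chat{n}$, $\PC{n}$ or $(\Chat{})^n$) and locally uniform divergence, so your case-by-case treatment is a faithful expansion of the paper's one-line justification.
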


%

While the three notions of normality are not equivalent, they satisfy an inclusion relation: normality with respect to $\PC{n}$ or $(\Chat{})^n$ implies normality with respect to $\Chat{n}$. 
This is described in the following lemma; the $\PC{n}$ case was proven in \cite[Lemma 2.4]{Arosio2019} and the $(\Chat{})^n$ case follows analogously.

\begin{lemma}[{\cite[Lemma 2.4]{Arosio2019}}]\label{Lemma:NormalityRelationsBakerVariant}
	Let $X$ be a connected complex manifold and let $n\in\N$. If a family $\mathcal{F} \subset \Hol(X, \CC^n)$ is $\PC{n}$-normal or $(\Chat{})^n$-normal, then it is already $\widehat{\CC^n}$-normal. 
    As a consequence, for each of these three compactifications, any limit function either lies in $\Hol(X, \CC^n)$ or the corresponding sequence diverges locally uniformly to infinity.
\end{lemma}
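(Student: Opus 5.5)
The plan is to handle both compactifications at once. Let $M$ denote either $\PC{n}$ or $(\Chat{})^n$, viewed as a compactification of $\CC^n$ with boundary $H := M \setminus \CC^n$. Let $(F_k)$ be a sequence in $\mathcal{F}$. By the assumed normality there is a subsequence, still written $F_k$, converging locally uniformly to some $G \in \Hol(X, M)$. Everything then reduces to a dichotomy: either $G(X) \subset \CC^n$, or the $F_k$ diverge locally uniformly to infinity. Both the $\Chat{n}$-normality and the final clause of the lemma follow from this dichotomy.

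\textbf{Case $M = (\Chat{})^n$.} Write $G = (g_1, \dots, g_n)$ with $g_j \in \Hol(X, \Chat{})$, and write $F_k = (f_{k,1}, \dots, f_{k,n})$. Then $f_{k,j} \to g_j$ locally uniformly in the spherical metric. Each $f_{k,j}$ is finite-valued, so near any point $x_0$ with $g_j(x_0) = \infty$ the functions $1/f_{k,j}$ are holomorphic and nowhere zero on a small connected neighbourhood, for $k$ large. They converge uniformly there to $1/g_j$. By Hurwitz's theorem (in several variables, applied along complex lines, or via the identity principle) $1/g_j$ is either identically zero or zero-free near $x_0$. Hence $g_j^{-1}(\infty)$ is open. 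It is also closed, so by connectedness of $X$ each $g_j$ is either identically $\infty$ or $\CC$-valued.

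If every $g_j$ is finite, then $G \in \Hol(X, \CC^n)$. On compact sets, spherical uniform convergence to a bounded limit is the same as Euclidean uniform convergence, so $F_k \to G$ in $\Hol(X, \CC^n)$. If some $g_j \equiv \infty$, then $|f_{k,j}| \to \infty$ uniformly on compacts, hence $\|F_k\| \to \infty$ locally uniformly.

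\textbf{Case $M = \PC{n}$.} Here $H$ is the hyperplane at infinity. Fix $x_0$ with $G(x_0) \in H$ and choose an affine chart of $\PC{n}$ around $G(x_0)$ in which $H$ is $\{u_0 = 0\}$. Concretely, if $G(x_0) = [0 : a_1 : \dots : a_n]$ with $a_j \neq 0$, use $u_0 = 1/z_j$ and $u_i = z_i/z_j$. For $k$ large, $F_k$ maps a small connected neighbourhood $V$ of $x_0$ into this chart. Then $u_0 \circ F_k$ is holomorphic and nowhere zero on $V$, because $F_k$ takes values in $\CC^n$. It converges uniformly to $u_0 \circ G$. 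The same Hurwitz argument shows $u_0 \circ G \equiv 0$ on $V$, so $G^{-1}(H)$ is open; it is also closed. Connectedness of $X$ gives either $G(X) \subset \CC^n$ or $G(X) \subset H$.

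In the first alternative we conclude exactly as before. In the second, take compact sets $K \subset X$ and $L \subset \CC^n$. The set $M \setminus L$ is an open neighbourhood of the compact set $G(K) \subset H$. By uniform convergence, $F_k(K) \subset M \setminus L$ for $k$ large, which is precisely uniform divergence to infinity on $K$.

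The only delicate point is the Hurwitz step in several variables. I would prove it by restricting to complex lines through $x_0$ in a local chart of $X$, applying the one-variable Hurwitz theorem on each line, and then using the identity principle to spread the vanishing over the connected neighbourhood.
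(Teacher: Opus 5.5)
Your proof is correct and is essentially the argument behind the cited result. Hurwitz's theorem is applied to a local defining function of the boundary: $1/z_j$ in an affine chart of $\PC{n}$, or $1/f_{k,j}$ coordinatewise in $(\Chat{})^n$. This makes $G^{-1}(H)$ open and closed, so by connectedness the limit lies entirely in $\CC^n$ or entirely at infinity. The paper gives no proof of its own: it cites Arosio et al.\ for $\PC{n}$ and says the $(\Chat{})^n$ case follows analogously, and your coordinatewise treatment of $(\Chat{})^n$ is exactly what that analogy amounts to.
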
  

Our main result \Autoref{THEOREM:NONBULGING} establishes nonbulging for $\Chat{n}$-normality, which by the above lemma suffices for all three notions of normality.
Hence, the choice of normality is immaterial for the rest of this paper.
When the type of Fatou set is not explicitly specified in a statement, it applies to all three definitions. 
In particular, we use $\F(F)$ and $\J(F)$ to denote the Fatou and Julia sets of $F$ regardless of which notion of normality is used. 

Lastly, the following definition provides the precise meaning of bulging. 

\begin{definition}\label{DEF:Bulging}
    Given $f, g: \CC \to \CC$ and $h\colon \CC^2 \to \CC$ holomorphic functions with $0$ a fixed point in the Fatou set of $g$, consider the skew-product $F\colon\CC^2\to\CC^2$ of the form $F(z,w) = (f(z) + wh(z,w), g(w))$. 
    We call a Fatou component $U \subset \CC$ of $f$ \emph{bulging} with respect to a given compactification (i.e., a notion of normality) if $U \times \lbrace 0 \rbrace$ lies in the corresponding Fatou set of $F$.
\end{definition}

%
%


\section{Non-bulging Fatou Components}\label{SECTION:4}

In this section, we construct perturbations that prevent bulging. The strategy is to carefully design an entire function using approximation techniques so that nearby orbits behave in fundamentally different ways. The main tool is an approximation argument based on the following two well-known results, namely a variant of Runge's Theorem and Weierstraß's Theorem for uniformly convergent sequences. 

\begin{lemma}[Runge, {\cite[p. 460]{Eremenko_Ljubich_1987}; see also \cite[Lemma 5.1]{Beni2019}}]\label{Lemma:Runge}
	Let $(E_n)_{n\in\NN}$ be a sequence of compact subsets of $\CC$ such that
	\begin{enumerate}
		\item the sets $\CC \setminus E_n$ are connected for all $n\in\NN$,
		\item the sets are pairwise disjoint, i.e., $E_n \cap E_m = \emptyset$ for all $n\neq m$, and
		\item $\min\lvert E_n \rvert := \min\set{\lvert z \rvert}{z \in E_n} \to \infty$ as $n \to \infty$.
	\end{enumerate}
	Furthermore, suppose that $\psi$ is holomorphic in a neighborhood of $E = \bigcup_{n=0}^{\infty} E_n$, and let $(\e_n)_{n\in\NN}$ be a sequence of positive real numbers. Then there exists an entire function $h$ such that
	\begin{equation*}
		\lvert h(z) - \psi(z) \rvert < \e_n
	\end{equation*}
	for all $z \in E_n$ and $n\in\NN$.
\end{lemma}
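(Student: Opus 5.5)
The plan is to build $h$ as a locally uniform limit of polynomials $h=\lim_m P_m$. At each step the classical Runge theorem (polynomial approximation on a compact set with connected complement) is applied to a finite piece of $E$ together with a large disk. Weierstraß's theorem then makes the limit entire. Replacing $\e_n$ by $\min\{\e_n,1\}$, we may assume $\e_n\le 1$.

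\textbf{Exhaustion.} For $m\in\NN$, let $S_m$ be the smallest set of indices with the following two properties: it contains every $n$ with $\min\lvert E_n\rvert\le m$, and it contains every $n$ such that $E_n$ meets the polynomial hull $K_m$ of $\overline{D(0,m)}\cup\bigcup_{j\in S_m}E_j$. This is a closure procedure, and it terminates after finitely many stages. The reason is that each intermediate hull is bounded, while $\min\lvert E_n\rvert\to\infty$ by (3), so only finitely many $E_n$ can meet it. Hence $S_m$ is finite, and $K_m$ is a compact set with connected complement. Moreover, every $E_n$ with $n\notin S_m$ is disjoint from $K_m$. The sets $S_m$ increase with $m$ and exhaust $\NN$, and $K_m\supset\overline{D(0,m)}$.

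\textbf{Inductive step.} Suppose a polynomial $P_m$ is given with $\lvert P_m-\psi\rvert<\e_n(1-2^{-m-1})$ on $E_n$ for all $n\in S_m$. Let $N=S_{m+1}\setminus S_m$. By construction, each $E_n$ with $n\in N$ is disjoint from $K_m$, and the sets $E_n$ are pairwise disjoint by (2). So $L=K_m\cup\bigcup_{n\in N}E_n$ is a finite disjoint union of compact sets, each with connected complement by (1).

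I claim $\CC\setminus L$ is connected. Čech cohomology satisfies $\check H^1(L)=\check H^1(K_m)\oplus\bigoplus_{n\in N}\check H^1(E_n)$, and each summand vanishes because each piece has connected complement. By Alexander duality, $\check H^1(L)=0$ is equivalent to $\CC\setminus L$ being connected.

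Define $\varphi=P_m$ near $K_m$ and $\varphi=\psi$ near each $E_n$, $n\in N$. This $\varphi$ is holomorphic on a neighbourhood of $L$. Runge's theorem gives a polynomial $P_{m+1}$ with $\lvert P_{m+1}-\varphi\rvert<\delta_m$ on $L$. Here we choose
\[
\delta_m<\min\bigl\{2^{-m-2}\min_{n\in S_{m+1}}\e_n,\;2^{-m}\bigr\}.
\]
This preserves the estimate on $E_n$ for $n\in S_m$, since $E_n\subset K_m$, and establishes it for $n\in N$. It also gives $\lvert P_{m+1}-P_m\rvert<2^{-m}$ on $K_m\supset\overline{D(0,m)}$.

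\textbf{Conclusion.} The sequence $(P_m)$ is uniformly Cauchy on every disk, so it converges locally uniformly, and by Weierstraß's theorem the limit $h$ is entire. For $z\in E_n$ with $n\in S_m$, the remaining corrections $P_{k+1}-P_k$ for $k\ge m$ are each smaller than $\delta_k$ on $K_k\supset E_n$. Summing them with the inductive bound gives $\lvert h-\psi\rvert\le \e_n(1-2^{-m-1})+\sum_{k\ge m}\delta_k<\e_n$.

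The main obstacle is the topological bookkeeping in the exhaustion. A single $E_n$ may straddle the circle $\lvert z\rvert=m$, and the union of a disk with finitely many $E_n$ can enclose holes, so the naive compacts need not have connected complement. The hull-closure definition of $S_m$ is designed to force all new sets to be disjoint from the previous compact. Disjointness is exactly what allows the cohomological additivity argument to apply, so I would spend most care checking that step.
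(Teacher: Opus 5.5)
The paper does not prove this lemma; it quotes it from Eremenko--Lyubich and Benini et al., so your argument has to stand on its own. Your scheme is the standard one: inductive Runge approximation on an increasing family of compacta with connected complement, followed by Weierstra\ss. Most of the details check out: the inductive estimate, the choice of $\delta_m$, the Cauchy bound on $\overline{D(0,m)}\subset K_m$, and the final inequality $\lvert h-\psi\rvert<\e_n$ on $E_n$. The unstated base case is just your inductive step started from $S_{-1}=K_{-1}=\emptyset$, $P_{-1}=0$.

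The step that is not proved is the finiteness of $S_m$. Your reason is that each intermediate hull is bounded, so by (3) only finitely many $E_n$ meet it. That shows only that each stage of the closure adds finitely many indices, not that the stages stop. A priori, a newly swallowed $E_n$ could stick out of the current hull and, together with it, enclose a new bounded complementary component containing part of some $E_{n'}$ further out, and so on forever. Then $S_m$ would be infinite and $K_m$ unbounded, and neither Runge's theorem on $L$ nor $\min_{n\in S_{m+1}}\e_n>0$ would be available. Condition (3) alone does not exclude this; you need (1) and (2).

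Here is the repair.
\begin{itemize}
\item Put $G=\overline{D(0,m)}\cup\bigcup_{\min\lvert E_j\rvert\le m}E_j$, let $K'$ be its hull, and let $N'$ be the finite set of $n$ with $\min\lvert E_n\rvert>m$ and $E_n\cap K'\neq\emptyset$.
\item Each such $E_n$ is disjoint from $G$. So $E_n\cap K'$ lies in the open set $K'\setminus G$, the union of the bounded components of $\CC\setminus G$, and is therefore clopen in $E_n$.
\item Hence $B_n:=E_n\setminus K'$ is compact with connected complement, by the same $\check{H}^1$-additivity you use later.
\item Then $K'\cup\bigcup_{n\in N'}E_n=K'\sqcup\bigsqcup_{n\in N'}B_n$ is a finite disjoint union of compacta with connected complement, so it equals its own hull.
\item Any further $E_k$ meeting this set must meet $K'$, since it is disjoint from the $E_n$ with $n\in N'$. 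So $k$ already lies in $\{n:\min\lvert E_n\rvert\le m\}\cup N'$.
\end{itemize}
Thus the closure stabilizes after one step and $S_m$ is finite. The rest of your proof then goes through unchanged.
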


\begin{theorem}[Weierstraß, {\cite[p. 176]{Ahlfors_1979}}]\label{theorem:weier}
    Let $G_1 \subset G_2 \subset \dots$ be an increasing sequence of domains in $\CC$ and $(f_n)_{n\in\NN}$ a sequence of holomorphic functions $f_n\colon G_n \to \CC$. Let $G := \bigcup_{n\in\NN} G_n$ and assume that $(f_n)_{n\in\NN}$ converges uniformly on any compact subset of $G$ to a limit function $f$. Then $f$ is holomorphic on $G$.
\end{theorem}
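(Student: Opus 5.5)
The plan follows the classical route: show that $f$ is holomorphic near every point of $G$ by combining local holomorphy of the $f_n$ with the Cauchy integral formula, which survives uniform limits.

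Fix a point $z^\ast \in G$. Since $G = \bigcup_n G_n$ and the $G_n$ are increasing open sets, there is an index $N$ and a radius $\rho > 0$ with $\overline{D(z^\ast,\rho)} \subset G_N \subset G_n$ for all $n \ge N$. Thus for $n \ge N$ each $f_n$ is holomorphic on a neighborhood of the closed disk $\overline{D} := \overline{D(z^\ast,\rho)}$. Since $\overline{D}$ is a compact subset of $G$, the hypothesis gives $f_n \to f$ uniformly on $\overline{D}$.

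First, $f$ is continuous on $\overline{D}$, because it is a uniform limit of continuous functions. Next, for every $z \in D(z^\ast,\rho)$ and $n \ge N$, Cauchy's integral formula gives
\begin{equation*}
  f_n(z) = \frac{1}{2\pi i}\int_{|\zeta - z^\ast| = \rho} \frac{f_n(\zeta)}{\zeta - z}\, d\zeta .
\end{equation*}
For fixed $z$, the kernel $1/(\zeta - z)$ is bounded on the circle by $1/(\rho - |z - z^\ast|)$. Hence uniform convergence on the circle lets us pass to the limit under the integral sign, and we obtain
\begin{equation*}
  f(z) = \frac{1}{2\pi i}\int_{|\zeta - z^\ast| = \rho} \frac{f(\zeta)}{\zeta - z}\, d\zeta .
\end{equation*}

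Finally, a Cauchy-type integral of a continuous density over a circle is holomorphic inside the disk. To see this, differentiate under the integral sign, which is justified since the integrand is smooth in $z$ and locally bounded away from the circle. Alternatively, expand $1/(\zeta - z)$ as a geometric series in $(z - z^\ast)/(\zeta - z^\ast)$, which converges uniformly for $\zeta$ on the circle. Either way, $f$ is holomorphic on $D(z^\ast,\rho)$. Since $z^\ast \in G$ was arbitrary, $f$ is holomorphic on $G$.

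An equally short alternative is Morera's theorem: for any closed triangle $\Delta \subset D(z^\ast,\rho)$ we have $\int_{\partial \Delta} f_n = 0$ by Goursat's theorem, and uniform convergence gives $\int_{\partial\Delta} f = 0$, so the continuous function $f$ is holomorphic.

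The only delicate point is the first step. Each $f_n$ is defined only on $G_n$, so we need a fixed disk lying in all but finitely many domains. This is exactly where the nestedness $G_1 \subset G_2 \subset \dots$ and openness are used: the finitely many $f_n$ with $n<N$ are simply discarded, since they do not affect the limit.
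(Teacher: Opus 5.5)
Your proof is correct: choosing a closed disk inside a single $G_N$ (so that every $f_n$ with $n \ge N$ is holomorphic near it) and then passing to the uniform limit in Cauchy's integral formula, or alternatively in Goursat/Morera, is the standard argument. The paper gives no proof of this statement and only cites Ahlfors, whose proof applies Morera's theorem (or Cauchy's formula) on small disks in the same way, so your route matches the cited one.
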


We now prove \Autoref{THEOREM:NONBULGING}. The key step in the proof is to  construct a perturbation $h$ that forces orbits near to the wandering domain in the invariant fiber to behave in incompatible ways, as one orbit grows large while another remains bounded, violating normality.

\begin{proof}[Proof of Theorem~{\ref{THEOREM:NONBULGING}}]
    By assumption, there exists a sequence $(z_n)_{n\in\NN}$ in $\CC$ satisfying $f(z_n) = z_{n+1}$ for all $n\in\NN$ with a subsequence $(z_{n_k})_{k\in\NN}$ diverging to infinity. By choosing a different subsequence, we may assume that $\lvert z_j \rvert < \lvert z_{n_k} \rvert$ for all $j < n_k$ and that $n_0 = 0$.


    Given any entire function $h$, define the skew-product
    \begin{equation*}
        F_h (z,w) = (f(z) + w h(z), g(w)).
    \end{equation*}
    Our goal is to show that, for an appropriately chosen $h$, the point $(z_0, 0)$ belongs to the Julia set of $F_h$. The construction of $h$ is provided by the following lemma. 

    \begin{lemma}\label{PROOF:OSC:LEMMA1}
        Let $(z_{n_k})_{k\in\NN}$ be the subsequence from above. There exist an entire function $h\colon \CC \to \CC$ and two sequences $(w_k)_{k \in \NN}$ and $(\widetilde{w}_k)_{k \in \NN}$ of nonzero complex numbers converging to $0$ with $\lvert g^{n_k+1}(w_k) \rvert < 1$ and $\lvert g^{n_k+1}(\widetilde{w}_k) \rvert < 1$ such that the skew-product $F_h$ satisfies
        \begin{equation}\label{PROOF:OSC:LEMMA1:EQ1}
            \lvert \pi_z F_h^{n_k+1}(z_0, w_k) \rvert \geq k
        \end{equation}
        and
        \begin{equation}\label{PROOF:OSC:LEMMA1:EQ2}
            \lvert \pi_z F_h^{n_k+1}(z_0, \widetilde{w}_k) \rvert \leq 1
        \end{equation}
        for all $k\in\NN$.
    \end{lemma}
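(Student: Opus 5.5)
We will build $h$ with Lemma~\ref{Lemma:Runge}: choose small disjoint compact discs $E_k$ around selected orbit points and prescribe $\psi$ on them so that the orbits of $(z_0,w_k)$ and $(z_0,\widetilde w_k)$ are steered at the last step. For the points $w_k,\widetilde w_k$ we fix $\widetilde w_k = w_k$ and distinguish the two orbits only through the value of $h$ near different points. Since that will not work, we instead choose both $w_k$ and $\widetilde w_k$ as small nonzero points (say $w_k,\widetilde w_k\to 0$ with $g^{j}(w_k), g^j(\widetilde w_k)\neq 0$ for $j\le n_k+1$, possible because $g$ is nonconstant so zeros of $g^j$ are isolated), with $|g^{n_k+1}(w_k)|,|g^{n_k+1}(\widetilde w_k)|<1$ by continuity at $0$ since $g^{j}(0)=0$.

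The construction is inductive in $k$. Suppose $h$ has been determined (up to a small error) on a compact set $K_{k-1}$ with connected complement containing $\overline{D}(0,R_{k-1})$, where $R_{k-1}$ exceeds $|z_j|$ for $j<n_k$. Choosing $w_k,\widetilde w_k$ very small, the orbit points $\pi_z F_h^{j}(z_0,w_k)$ for $j\le n_k$ stay within any prescribed distance of $z_j$ (continuity of finitely many iterates, using only values of $h$ on a neighbourhood of $\{z_0,\dots,z_{n_k-1}\}\subset D(0,R_{k-1})$). By the choice $|z_j|<|z_{n_k}|$ for $j<n_k$, the point $z_{n_k}$ lies outside $D(0,R_{k-1})$ once $R_{k-1}$ is taken between $\max_{j<n_k}|z_j|$ and $|z_{n_k}|$. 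We take a tiny disc $E_k=\overline D(z_{n_k},\delta_k)$ disjoint from previous sets, and define $\psi$ on $E_k$ as a polynomial (e.g.\ affine in $z$) chosen so that, writing $\zeta_k,\widetilde\zeta_k$ for the approximate $n_k$-th orbit points and $v_k=g^{n_k}(w_k)$, $\widetilde v_k=g^{n_k}(\widetilde w_k)$,
\[
f(\zeta_k)+v_k\psi(\zeta_k)\ \text{has modulus}\ \ge k+1,\qquad f(\widetilde\zeta_k)+\widetilde v_k\psi(\widetilde\zeta_k)=0 .
\]
This is a two-point interpolation problem, solvable when $\zeta_k\neq\widetilde\zeta_k$ or $v_k\ne\widetilde v_k$; we arrange $v_k\neq\widetilde v_k$ (e.g.\ $\widetilde w_k=w_k/2$ generically) and take $\psi$ constant if $\zeta_k=\widetilde\zeta_k$ is forced, otherwise affine. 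Then we set $K_k=K_{k-1}\cup E_k\cup\overline D(0,R_k)$-type enlargements must avoid future discs; concretely we apply Lemma~\ref{Lemma:Runge} once at the end with the sets $E_k$ (which satisfy $\min|E_k|\to\infty$ since $z_{n_k}\to\infty$) together with the conditions that $h$ is determined only up to errors $\e_k$.

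The main obstacle is circularity: the orbit points $\zeta_k,\widetilde\zeta_k$ depend on $h$ near earlier points $z_j$, $j<n_k$, which include earlier discs $E_i$ and also points $z_j$ not in any $E_i$ where $h$ is not prescribed. I would resolve this by letting $E_k$ be a disc containing all of $z_0,\dots,z_{n_k-1}$? That conflicts with disjointness, so instead I would take $E_k$ as a small disc only around $z_{n_k}$ and make $w_k,\widetilde w_k$ so small that $v_k,\widetilde v_k$ are tiny; then the deviation of $\zeta_k$ from $z_{n_k}$ is $O(|w_k|\sup_{B}|h|)$ on a fixed ball $B\supset\{z_j\}_{j<n_k}$, where $\sup_B|h|$ is bounded a priori because Runge gives $h$ with controlled size on $\overline D(0,R_{k-1})$ if we include $\overline D(0,R_{k-1})$ with $\psi=0$ in a preliminary set (replacing earlier discs by one large disc, with later $E_k$ outside it). The robustness of the two target inequalities (strict $\ge k+1$ becomes $\ge k$, and $0$ becomes $\le 1$) under perturbations of size $\e_k$ of $h$ on $E_k$, multiplied by small $|v_k|$, then closes the argument; choosing $\e_k$ and $w_k$ in the right order (first the a priori bound, then $w_k$, then $\psi$, then $\e_k$) is the delicate bookkeeping step.
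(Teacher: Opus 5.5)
\textbf{There is a genuine gap.} It is the circularity you identify yourself but do not resolve.

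The landing points $\pi_z F_h^{n_k}(z_0,w_k)$ and $\pi_z F_h^{n_k}(z_0,\widetilde w_k)$ depend on the \emph{final} $h$ along the first $n_k$ steps of the orbit. That means they depend on $h$ near $z_0,\dots,z_{n_k-1}$, and most of those points lie outside $\bigcup_i E_i$. A single application of Lemma~\ref{Lemma:Runge} to the discs $E_k$ gives no information on $h$ there. So there is no function with respect to which your ``approximate orbit points'' $\zeta_k,\widetilde\zeta_k$ are even defined, and $w_k$ cannot be chosen before $h$ is known.

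Your patch, prescribing $\psi=0$ on $\overline{D(0,R_{k-1})}$, is inconsistent. That disc contains the earlier $E_i$, $i<k$, where $\psi$ must take the (typically huge) values that steer the orbits at time $n_i$. Lemma~\ref{Lemma:Runge} also requires pairwise disjoint sets, so nested discs are not allowed.

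Even an a priori bound $\sup_B\lvert h\rvert\le M$ would not suffice for your affine design. The points $\zeta_k,\widetilde\zeta_k$ are only $O(\lvert w_k\rvert)$ apart. An $h$ known only up to such a bound displaces the true landing points by an amount of the same order $O(\lvert w_k\rvert M)$. So an interpolant making $v_k\psi$ change by an amount of order $k$ between these two points is not robust. Your robustness check covers only the error $\e_k$ of $h$ on $E_k$, not this displacement.

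Separately, the fallback ``$\psi$ constant if $\zeta_k=\widetilde\zeta_k$'' with $\widetilde w_k=w_k/2$ fails. The condition $f(\zeta)+\widetilde v_k c=0$ forces $f(\zeta)+v_kc=f(\zeta)(1-v_k/\widetilde v_k)$. This has no reason to have modulus $\ge k+1$, and it vanishes if $f(\zeta)=0$.

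\textbf{What is missing} is an inductive sequence of entire functions $h_k$, with a stability tolerance chosen \emph{after} $w_k,\widetilde w_k$. The paper proceeds as follows.
\begin{enumerate}
\item Given the entire $h_{k-1}$, choose $w_k,\widetilde w_k$ by continuity for this \emph{known} map. The orbit points of index $j<n_k$ should stay inside a large disc $H_k$, which contains $z_0,\dots,z_{n_k-1}$ and all earlier discs and is disjoint from $D_k=\overline{D(z_{n_k},\delta_k)}$. The $n_k$-th points should be two distinct points of $D_k$.
\item Take disjoint discs $\Delta_k,\widetilde\Delta_k$ of radius $R_k$ around these two points, on which $f$ oscillates by less than $1/3$.
\item Choose $\delta_k$ so that every entire $\widetilde h$ with $\max_{H_k}\lvert \widetilde h-h_{k-1}\rvert<\delta_k$ moves both landing points by less than $R_k$.
\item Only then apply Runge to $H_k\cup\Delta_k\cup\widetilde\Delta_k$. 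Prescribe $h_{k-1}$ on the \emph{whole} disc $H_k$ (not $0$). On $\Delta_k,\widetilde\Delta_k$ prescribe the constants that make $f+g^{n_k}(w_k)h$ equal $k+1$, resp.\ $f+g^{n_k}(\widetilde w_k)h$ equal $0$, at the two centres. The error must be below $2^{-(k+1)}\min\{\delta_0,\dots,\delta_k\}$.
\end{enumerate}
The limit $h=\lim h_k$ (by Weierstrass) then satisfies $\max_{H_k}\lvert h-h_{k-1}\rvert<\delta_k$, so the true landing points lie in $\Delta_k,\widetilde\Delta_k$. It also satisfies $\max_{H_{k+1}}\lvert h-h_k\rvert<\delta_{k+1}$, and $\Delta_k\cup\widetilde\Delta_k\subset H_{k+1}$, so $h$ remains close to the prescribed constants there. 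Prescribing constants on two disjoint discs, rather than an interpolating polynomial on one disc, is what makes the final estimate insensitive to where exactly in $\Delta_k,\widetilde\Delta_k$ the orbit lands.
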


    We now prove that $(z_0, 0)$ belongs to the Julia set $\J(F)$ of $F = F_h$, where $h$ is as provided by \Autoref{PROOF:OSC:LEMMA1}.
    Assume by contradiction that $(z_0, 0)$ belongs to the Fatou set $ \F(F)$ of $F$. 
    Then, the iterates of $F$ form a normal family at $(z_0, 0)$. 
    In particular, one can find a compact neighborhood $K$ of $(z_0,0)$ and a subsequence $(F^{n_{k(j)} + 1})_{j\in\NN}$ of iterates such that this subsequence either diverges uniformly to $\infty$ on $K$ or converges uniformly to a finite limit function on $K$.

    Assume that the subsequence diverges uniformly to infinity. In particular, there exists $J \in\NN$ such that for all $j \geq J$ and $(z,w) \in K$,
    \begin{equation*}
        \lVert F^{n_{k(j)} + 1}(z,w) \rVert \geq 2,
    \end{equation*}
    where $\lVert \cdot \rVert$ denotes the Euclidean norm on $\CC^2$.
    Since $\widetilde{w}_k \to 0$ as $k \to \infty$ and $K$ is a neighborhood of $(z_0,0)$, there exists $J_K' \in \NN$ such that $(z_0, \widetilde{w}_{k(j)}) \in K$ for all $j \geq J_K'$. 
    Using $\lvert g^{n_{k(j)} + 1}(\widetilde{w}_{k(j)}) \rvert < 1$ and \eqref{PROOF:OSC:LEMMA1:EQ2}, it follows that
    \begin{align*}
        \lVert F^{n_{k(j)} + 1}(z_0,\widetilde{w}_{k(j)}) \rVert &\leq \lvert \pi_z F^{n_{k(j)} + 1}(z_0,\widetilde{w}_{k(j)}) \rvert + \lvert g^{n_{k(j)} + 1}(\widetilde{w}_{k(j)}) \rvert \\
        &< 1 + 1 = 2
    \end{align*}
    for all $j \geq \max\lbrace J,J_K' \rbrace$, giving a contradiction.

    Now, assume that the subsequence of iterates has a finite limit function $\Phi\colon K \to \CC^2$, i.e., $F^{n_{k(j)} + 1}$ converges to $\Phi$ uniformly on $K$ as $j \to \infty$. Then there exists $J \in \NN$ such that
    \begin{equation*}
        \lVert F^{n_{k(j)} + 1}(z,w) - \Phi(z,w) \rVert \leq 1
    \end{equation*}
    for all $j \geq J$ and $(z,w) \in K$. As before, we find $J_K'\in\NN$ such that $(z_0, w_{k(j)}) \in K$ for all $j \geq J_K'$. 
    Let $A := \max_{(z,w) \in K} \lVert \Phi(z,w) \rVert$ and choose $J''\in\NN$ such that $k(j) > A + 1$ for all $j \geq J''$.

    Using \eqref{PROOF:OSC:LEMMA1:EQ1} and the reverse triangle inequality, it follows that
    \begin{align*}
        \lVert F^{n_{k(j)} + 1}(z_0,w_{k(j)}) - \Phi(z_0,w_{k(j)}) \rVert &\geq \left\lvert \lVert F^{n_{k(j)} + 1}(z_0,w_{k(j)}) \rVert - \lVert \Phi(z_0,w_{k(j)}) \rVert \right\rvert \\
        %
        %
        &\geq k(j) - A \\
        &> A + 1 - A = 1
    \end{align*}
    for all $j \geq \max\lbrace J,J_K',J'' \rbrace$, giving a contradiction. 
    Therefore $(z_0,0)$ does not belong to the Fatou set of $F$.
\end{proof}

    \begin{proof}[Proof of Lemma~{\ref{PROOF:OSC:LEMMA1}}]
        The function $h$ will be defined as the limit of an inductively constructed sequence of entire functions.  

        Fix
        \begin{equation*}
            0 < \widetilde{\delta}_0 < \min\left\lbrace \frac{\lvert z_{n_{1}} \rvert - \lvert z_{n_0} \rvert}{4}, 1 \right\rbrace
        \end{equation*}
        and, for each $k \geq 1$, fix
        \begin{equation*}
            0 < \widetilde{\delta}_k < \min\left\lbrace \frac{\inf_{j < n_k}( \lvert z_{n_k} \rvert - \lvert z_{j} \rvert )}{4}, \frac{\lvert z_{n_{k+1}} \rvert - \lvert z_{n_k} \rvert}{4}, \frac{\widetilde{\delta}_{k-1}}{2} \right\rbrace.
        \end{equation*}
        This yields a strictly decreasing sequence $(\widetilde{\delta}_k)_{k\in\NN}$ of positive real numbers such that for every sequence $(\delta_k)_{k\in \NN}$ of positive real numbers that is bounded from above by $(\widetilde{\delta}_k)_{k\in \NN}$ (i.e., such that $0 < \delta_k < \widetilde{\delta}_k$), the following hold:
        \begin{enumerate}[label=(\alph*)]
            \item $\delta_k \to 0$ as $k\to \infty$,
            \item the sets $D_k := \overline{D(z_{n_k}, \delta_k)}$ are pairwise disjoint, compact, and simply connected,
            \item setting $H_0 := \emptyset$ and $H_k := \overline{D(0, \lvert z_{n_k} \rvert - 2\delta_k)}$ for $k \geq 1$, the sets $H_k$ are compact, simply connected, form an exhaustion of $\CC$, and satisfy $H_k \subset \interior{H}_{k+1}$, and
            \item for every $k \in \NN$, the set $H_k$ contains $D_j$ for all $j < k$, is disjoint from $D_j$ for all $j \geq k$, and satisfies $z_j \in \interior{H_k}$ for all $j < n_k$.
        \end{enumerate}
        \begin{figure}[htbp]
            \centering
            \begin{tikzpicture}
                \filldraw[fill=gray!20, draw=black]
                    (0,0) circle (3);
                \node[below] at (0,3) {$H_1$};

                \filldraw[fill=gray!30, draw=black]
                    (5,0) circle (1);
                \node[below] at (5,1) {$D_1$};

                \draw[-]
                    (5, 0) -- (4,0);
                \node[below] at (4.5,0) {$\delta_1$};
                \draw[|-|]
                    (3, 0) -- (4,0);
                \node[below] at (3.5,0) {$\delta_1$};

                \filldraw[fill=gray!30, draw=black]
                    (0,0) circle (1);
                \draw[-]
                    (-1, 0) -- (0,0);
                \node[below] at (-0.5,0) {$\delta_0$};
                \node[below] at (0,1) {$D_0$};
                \node at (0,0) [circle, fill=black, inner sep=1pt,  label=right:$z_{0}$] {};
                \node at (-2,1) [circle, fill=black, inner sep=1pt,  label=right:$z_{1}$] {};
                \node at (1.5,-1.5) [circle, fill=black, inner sep=1pt,  label=right:$z_{2}$] {};
                \node at (-1.5,-1.25) [circle, fill=black, inner sep=1pt,  label=right:$z_{3}$] {};
                \node at (5,0) [circle, fill=black, inner sep=1pt,  label=right:$z_{4}$] {};
            \end{tikzpicture}
            \caption{The first sets of the construction if $n_0 = 0$ and $n_1 = 4$.}
            \label{fig:ConstructionNonBulg}
        \end{figure}
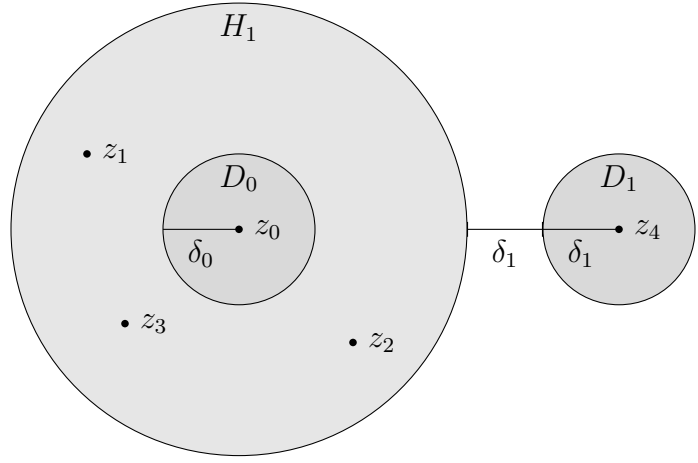

        The following lemma provides the core inductive construction. Each of the conditions is designed to control the behavior of selected orbits and ensure stability under small perturbations.
        
                \begin{lemma}\label{PROOF:OSC:LEMMA2}
            There exist a sequence of nonconstant entire functions $(h_k)_{k\in\NN}$, two sequences $(w_k)_{k \in \NN}$ and $(\widetilde{w}_k)_{k \in \NN}$ of nonzero complex numbers, and two sequences of positive real numbers $(R_k)_{k\in\NN}$ and $(\delta_k)_{k\in\NN}$ in the interval $\left( 0,\frac{1}{3} \right)$ such that $(\delta_k)_{k\in \NN}$ is bounded from above by $(\widetilde{\delta}_k)_{k\in\NN}$, and for all $k \geq 1$, the following hold:
            \begin{enumerate}[label=(\roman*)]
                \item $ \lvert w_k \rvert < \frac{1}{k} $, $\lvert \widetilde{w}_k \rvert < \frac{1}{k}$, $ \lvert g^j(w_k) \rvert < 1 $, and $\lvert g^j(\widetilde{w}_k) \rvert < 1$ for all $j \in \lbrace 0,\dots,n_k + 1 \rbrace$, \label{PROOF:OSC:LEMMA2:PROP1}
                \item $\Delta_k := \overline{D(\pi_z F_{h_{k-1}}^{n_k}(z_0, w_k), R_k)} \subset D_k $ and $\widetilde{\Delta}_k := \overline{D(\pi_z F_{h_{k-1}}^{n_k}(z_0, \widetilde{w}_k), R_k)} \subset D_k$ are disjoint, \label{PROOF:OSC:LEMMA2:PROP2}
                \item $\displaystyle\max_{z \in H_k}\lvert h_{k}(z) - h_{k-1}(z) \rvert < \frac{1}{2^{k+1}} \min\lbrace \delta_0, \dots, \delta_{k} \rbrace$, \label{PROOF:OSC:LEMMA2:PROP3}
                \item $\displaystyle\max_{z \in \Delta_k}\left\lvert h_{k}(z) + \frac{1}{g^{n_k}(w_k)} \left( f(\pi_z F_{h_{k-1}}^{n_k}(z_0, w_k)) - (k+1) \right) \right\rvert < \frac{1}{3}$, \label{PROOF:OSC:LEMMA2:PROP4}
                \item $\displaystyle\max_{z \in \widetilde{\Delta}_k}\left\lvert h_{k}(z) + \frac{1}{g^{n_k}(\widetilde{w}_k)} f(\pi_z F_{h_{k-1}}^{n_k}(z_0, \widetilde{w}_k))  \right\rvert < \frac{1}{3}$,\label{PROOF:OSC:LEMMA2:PROP5}
                \item for any entire function $\widetilde{h}\colon \CC \to \CC$ with $\displaystyle\max_{z \in H_k}\lvert h_{k-1}(z) - \widetilde{h}(z) \rvert < \delta_k$, we have
                \begin{equation*}
                    \lvert \pi_z F_{h_{k-1}}^{n_k}(z_0, w_k) - \pi_z F_{\widetilde{h}}^{n_k}(z_0, w_k) \rvert < R_k
                \end{equation*}
                and
                \begin{equation*}
                    \lvert \pi_z F_{h_{k-1}}^{n_k}(z_0, \widetilde{w}_k) - \pi_z F_{\widetilde{h}}^{n_k}(z_0, \widetilde{w}_k) \rvert < R_k,
                \end{equation*} \label{PROOF:OSC:LEMMA2:PROP6}
                \item for all $z$ and $z' $ belonging either both to $ \Delta_k$ or both to $\widetilde{\Delta}_k$, we have
                \begin{equation*}
                    \lvert f(z) - f(z') \rvert < \frac{1}{3}.
                \end{equation*} \label{PROOF:OSC:LEMMA2:PROP7}
            \end{enumerate}
        \end{lemma}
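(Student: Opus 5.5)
We construct the data inductively in $k$, starting from $h_0$. Take $h_0$ to be any nonconstant entire function, for instance $h_0(z)=z$. Pick any $\delta_0<\min\{\widetilde\delta_0,1/3\}$, and let $w_0,\widetilde w_0,R_0$ be arbitrary, since conditions (i)--(vii) are only required for $k\ge1$.

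Now suppose $h_{k-1}$ and $\delta_0,\dots,\delta_{k-1}$ have been fixed. Step $k$ proceeds in the following order.

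\textbf{Choosing $w_k$ and $\widetilde w_k$.} Since $0$ is a fixed point of $g$, continuity of $g^j$ for $j\le n_k+1$ gives a neighborhood of $0$ on which $|g^j|<1$ for all these $j$. The map $w\mapsto \pi_z F_{h_{k-1}}^{n_k}(z_0,w)$ is holomorphic and equals $z_{n_k}$ at $w=0$. We will make $\Phi(w):=\pi_zF_{h_{k-1}}^{n_k}(z_0,w)$ nonconstant, or else handle the constant case separately (see the last paragraph). We need $g^{n_k}(w_k)\neq0$, so that (iv) and (v) make sense. Since $g$ is nonconstant, $g^{n_k}$ has isolated zeros, and we may choose $w_k\neq\widetilde w_k$ nonzero with $|w_k|,|\widetilde w_k|<1/k$, with $g^{n_k}$ nonzero at both, and with $\Phi(w_k)\neq\Phi(\widetilde w_k)$, both lying in the interior of $D_k$. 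This gives (i).

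\textbf{Choosing $R_k$ and $\delta_k$.} Choose $R_k$ smaller than a third of $|\Phi(w_k)-\Phi(\widetilde w_k)|$ and than the distances to $\partial D_k$. Shrink $R_k$ further using uniform continuity of $f$ on $D_k$ so that (vii) holds. This gives (ii) and (vii).

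For (vi), note that the orbit points $\pi_zF^{j}_{h_{k-1}}(z_0,w_k)$ for $j<n_k$ can be assumed to lie in $\interior{H_k}$. This holds for the unperturbed orbit $z_j$ by property (d), and it persists for small $w_k$ by continuity. Here we shrink $w_k$ beforehand if needed, using the margin $2\delta_k$ built into $H_k$; this is the delicate point below. The orbit under $F_{\widetilde h}$ for $n_k$ steps then depends only on the values of $\widetilde h$ on a compact subset of $\interior{H_k}$. Composition of $n_k$ continuous maps is continuous in the perturbation: if $\widetilde h$ is uniformly $\delta$-close to $h_{k-1}$ on $H_k$, then by Cauchy estimates its derivatives are controlled on slightly smaller sets, and the finitely many iterates stay close. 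Hence some $\delta_k<\min\{\widetilde\delta_k,1/3\}$ achieves (vi).

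\textbf{Runge approximation.} Apply \Autoref{Lemma:Runge} to the three pairwise disjoint, polynomially convex compact sets $H_k$, $\Delta_k$ and $\widetilde\Delta_k$; the last two lie in $D_k$, which is disjoint from $H_k$ by (d). (Finitely many sets suffice, the lemma being applied with the remaining sets empty, or using the classical Runge theorem.) Take $\psi$ to be $h_{k-1}$ near $H_k$, the constant $-\bigl(f(\Phi(w_k))-(k+1)\bigr)/g^{n_k}(w_k)$ near $\Delta_k$, and the constant $-f(\Phi(\widetilde w_k))/g^{n_k}(\widetilde w_k)$ near $\widetilde\Delta_k$. Approximating with tolerances $2^{-(k+1)}\min\{\delta_0,\dots,\delta_k\}$ and $1/3$ gives an entire $h_k$ satisfying (iii), (iv) and (v). Adding a tiny multiple of $z$ if necessary keeps $h_k$ nonconstant.

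\textbf{Main obstacle.} The main difficulty is the compatibility in choosing $w_k$. We need $\Phi$ to be nonconstant, so that two distinct points with distinct images exist, while the intermediate orbit stays inside $\interior{H_k}$ and the images stay inside $D_k$. We also need $g^{n_k}(w_k)\neq 0$.

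For nonconstancy, compute $\partial_w\Phi(0)=\sum_{j<n_k}(f^{n_k-1-j})'(z_{j+1})\,(g^j)'(0)\,h_{k-1}(z_j)$. If this vanishes, we first modify $h_{k-1}$ near $z_{n_k-1}$ by a small Runge correction within the allowed $\delta$-budget so that it becomes nonzero. Alternatively, we accept $\Phi$ constant and instead separate the two targets by perturbing within $D_k$ using a single $w$. This is the step I expect to require the most care, and the first thing I would try is the derivative argument when $g'(0)\neq0$, treating the case where $g$ is superattracting separately via higher-order terms.
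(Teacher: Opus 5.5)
Your construction follows the paper's proof step for step. You choose $w_k,\widetilde w_k$ by continuity at $w=0$, then $R_k$ for (ii) and (vii), then $\delta_k$ from continuity of the first $n_k$ iterates in $h$ for (vi), and finally apply Runge on $H_k\cup\Delta_k\cup\widetilde\Delta_k$ with $h_{k-1}$ on $H_k$ and the two constants on the disks.

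The step you single out is a genuine gap, and the paper does not really close it either. It asserts that $\Phi(w)=\pi_zF^{n_k}_{h_{k-1}}(z_0,w)$ takes two distinct values near $0$ ``since both $h_{k-1}$ and $g$ are nonconstant'', and this does not follow. Take $f(z)=z+1$, $z_0=0$, $n_k=k$ and $h_0=\mathrm{id}$, which is your choice as well as the paper's. Then $\Phi(w)=f(0)+w\,h_0(0)\equiv1$ at $k=1$, so $\Delta_1=\widetilde\Delta_1$ and (ii) is impossible.

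Your proposal leaves the issue open:
\begin{enumerate}
\item The first-order fix only works if $g'(0)\neq0$. For $g'(0)=0$ your formula reduces to $\partial_w\Phi(0)=h_{k-1}(z_0)\,(f^{n_k-1})'(z_1)$, which vanishes whenever one of $z_1,\dots,z_{n_k-1}$ is critical for $f$, whatever $h_{k-1}$ is.
\item The superattracting case is only announced, not handled.
\item Accepting a constant $\Phi$ is incompatible with (ii), since $\Delta_k$ and $\widetilde\Delta_k$ would then have the same center.
\end{enumerate}

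A uniform way to close the gap: before choosing $w_k$, replace $h_{k-1}$ by $h_{k-1}+\varepsilon P$ with
\[
P(z)=\prod_{j=0}^{n_k-2}\Bigl(\frac{z-z_j}{z_{n_k-1}-z_j}\Bigr)^{N},\qquad N>m^{n_k-1},
\]
where $g(w)=aw^m+O(w^{m+1})$ with $a\neq0$. The $z_j$ are pairwise distinct because the orbit of $z_0$ is infinite.

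Since $\pi_zF^j(z_0,w)=z_j+O(w)$, induction shows that the perturbation changes $\pi_zF^j(z_0,w)$ only by $O(w^N)$ for $j\le n_k-1$. Hence the coefficient of $w^{m^{n_k-1}}$ in $\Phi$ changes by exactly $c\,\varepsilon$, where $c\neq0$ is the leading coefficient of $g^{n_k-1}$. So $\Phi$ is nonconstant for all but one value of $\varepsilon$, in the attracting, superattracting and indifferent cases alike.

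For $|\varepsilon|$ small, (iii)--(v) at step $k-1$ survive because they are strict inequalities on compact sets. Properties (i), (ii), (vi), (vii) at step $k-1$ do not involve $h_{k-1}$ at all.

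A smaller point, also shared with the paper: $H_k$ and $D_k$ are defined through $\delta_k$. Yet you place $\Phi(w_k)$ and $\Delta_k$ inside $D_k$ before fixing $\delta_k$, and shrinking $\delta_k$ afterwards can destroy $\Delta_k\subset D_k$. Defining $H_k$ and $D_k$ with $\widetilde\delta_k$ in place of $\delta_k$ removes this circularity. Properties (b)--(d) still hold, the rest of the argument only uses $D_k\cap H_k=\emptyset$ and $D_k\subset H_{k+1}$, and $\delta_k$ then serves only as the tolerance in (iii) and (vi).
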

        
Roughly speaking, the construction ensures that two nearby initial conditions lead to drastically different growth behaviors under iteration. In fact, \ref{PROOF:OSC:LEMMA2:PROP2} ensures separation of orbits, \ref{PROOF:OSC:LEMMA2:PROP4} and \ref{PROOF:OSC:LEMMA2:PROP5} enforce different growth behaviors, while \ref{PROOF:OSC:LEMMA2:PROP6} provides stability under perturbations.
        
    \begin{proof}[Proof of Lemma~{\ref{PROOF:OSC:LEMMA2}}]
        First, note that for all $k\in\NN$, for all $j < n_k$, and for all $\tilde{h}\colon \CC \to \CC$, entire we have
        \begin{equation}\label{PROOF:OSC:EQ:ZeroSpecialCase}
            \pi_z F^{j}_{\widetilde{h}}(z_0, 0) = \pi_z (f^{j}(z_0),0) = f^{j}(z_0) = z_{j} \in \interior{H_k}.
        \end{equation}
        
        To start the construction, let $w_0$ and $ \widetilde{w}_0$ be any nonzero complex numbers and set $h_0 := \operatorname{id}_{\CC}$.
        Moreover, choose $ 0 < R_0 = \delta_0 < \min\lbrace \widetilde{\delta}_0, \frac{1}{4} \rbrace $. 

        Now, for $k \geq 1$, given any entire nonconstant functions $h_0, \dots, h_{k-1}$, any complex numbers $w_0, \dots, w_{k-1}$, $\widetilde{w}_0, \dots, \widetilde{w}_{k-1}$, any positive real numbers $\delta_0, \dots, \delta_{k-1}$ bounded from above by $\widetilde{\delta}_0, \dots, \widetilde{\delta}_{k-1}$ and any positive real numbers $R_0, \dots, R_{k-1}$ satisfying pro\-per\-ties~\ref{PROOF:OSC:LEMMA2:PROP1}--\ref{PROOF:OSC:LEMMA2:PROP7}, we construct $h_k$, $w_k$, $\widetilde{w}_k$, $R_k$, and $\delta_k$ satisfying the same properties as follows.

        Thanks to \eqref{PROOF:OSC:EQ:ZeroSpecialCase}, the continuity of $w \mapsto \pi_z F^j_{h_{k-1}}(z_0,w) $ at $0$ for $j = 0, \dots, n_k$ allows us to find $w_{k}$ and $ \widetilde{w}_{k} $ nonzero complex numbers small enough such that
        \begin{itemize}
            \item $\lvert w_{k} \rvert < \frac{1}{k}$, $\lvert \widetilde{w}_{k} \rvert < \frac{1}{k}$, $ \lvert g^j(w_k) \rvert < 1 $, and $\lvert g^j(\widetilde{w}_k) \rvert < 1$ for all $j \in \lbrace 0,\dots,n_k + 1 \rbrace$, which is required for property~\ref{PROOF:OSC:LEMMA2:PROP1},
            \item $\pi_z F^{j}_{h_{k-1}}(z_0, w_{k})  \in \interior{H_k}$ for all $j < n_k$,
            \item $\pi_z F^{j}_{h_{k-1}}(z_0, \widetilde{w}_{k}) \in \interior{H_k}$ for all $j < n_k$,
            \item $\pi_z F^{n_k}_{h_{k-1}}(z_0, w_{k}) \in \interior{D_k}$, 
            \item $\pi_z F^{n_k}_{h_{k-1}}(z_0, \widetilde{w}_{k}) \in \interior{D_k}$, and
            \item $ \pi_z F^{n_k}_{h_{k-1}}(z_0, w_{k}) \neq \pi_z F^{n_k}_{h_{k-1}}(z_0, \widetilde{w}_{k})$.
        \end{itemize}
        It is possible to obtain the last property since both $h_{k-1}$ and $g$ are nonconstant functions.

        Choose $0<R_k <\frac{1}{3}$ small enough such that
        \begin{itemize}
            \item $\Delta_k := \overline{D(\pi_z F_{h_{k-1}}^{n_k}(z_0, w_k), R_k)} \subset D_k $ and $\widetilde{\Delta}_k := \overline{D(\pi_z F_{h_{k-1}}^{n_k}(z_0, \widetilde{w}_k), R_k)} \subset D_k$ are disjoint, and
            \item for all $z$ and $z' $ belonging either both to $ \Delta_k$ or both to $\widetilde{\Delta}_k$, we have
                \begin{equation*}
                    \lvert f(z) - f(z') \rvert < \frac{1}{3}.
                \end{equation*}
        \end{itemize}
        This yields properties~\ref{PROOF:OSC:LEMMA2:PROP2} and \ref{PROOF:OSC:LEMMA2:PROP7}.

        Using the continuity of the map $h \mapsto F_h$ and the fact that $\pi_z F^{j}_{h_{k-1}}(z_0, w_{k})  \in \interior{H_k}$ for all $j < n_k$, we can find a sufficiently small $\rho > 0$ such that the inequality $\max_{z\in H_{k}}\lvert h_{k-1}(z) - \widetilde{h}(z) \rvert < \rho$ implies
        \begin{equation*}
            \lvert \pi_z F_{h_{k-1}}^{n_k}(z_0, w_k) - \pi_z F_{\widetilde{h}}^{n_k}(z_0, w_k) \rvert < R_k.
        \end{equation*}
        Similarly, we find $\widetilde{\rho}>0$ such that the inequality $\max_{z\in H_{k}}\lvert h_{k-1}(z) - \widetilde{h}(z) \rvert < \widetilde{\rho}$ implies
        \begin{equation*}
            \lvert \pi_z F_{h_{k-1}}^{n_k}(z_0, \widetilde{w}_k) - \pi_z F_{\widetilde{h}}^{n_k}(z_0, \widetilde{w}_k) \rvert < R_k.
        \end{equation*}
        Setting $\delta_k := \min\lbrace \rho, \widetilde{\rho}, \widetilde{\delta}_k, \frac{1}{4} \rbrace$, we obtain property~\ref{PROOF:OSC:LEMMA2:PROP6}.

        Define the holomorphic function $\Phi\colon H_{k} \cup \Delta_{k} \cup \widetilde{\Delta}_{k} \to \CC$ as
        \begin{equation*}
            \Phi(z)=  \begin{cases}
                    h_{k-1}(z),&\;\text{ if}\; z\in H_{k},\\
                    - \frac{1}{g^{n_k}(w_k)} (f(\pi_z F_{h_{k-1}}^{n_k}(z_0, w_k)) - (k+1)),&\;\text{ if}\;z \in \Delta_{k}, \\
                    - \frac{1}{g^{n_k}(\widetilde{w}_k)} f(\pi_z F_{h_{k-1}}^{n_k}(z_0, \widetilde{w}_k)),&\;\text{ if}\;z \in \widetilde{\Delta}_{k}.
                \end{cases}
        \end{equation*}
        Since $H_k$, $\Delta_{k}$, and $\widetilde{\Delta}_{k}$ are compact, simply connected and disjoint, their union $H_k \cup \Delta_k \cup \widetilde{\Delta}_{k}$ is compact with connected complement. Therefore, we can apply Runge's Theorem \ref{Lemma:Runge}, which yields a nonconstant entire function $h_k\colon \CC \to \CC$ satisfying
        \begin{equation*}
            \max_{z \in H_k \cup \Delta_{k} \cup \widetilde{\Delta}_{k}}\left\lvert \Phi(z) - h_k(z) \right\rvert < \frac{1}{2^{k+1}} \min\lbrace \delta_0, \dots, \delta_{k} \rbrace < \frac{1}{3}.
        \end{equation*}
        Thus, we obtain properties~\ref{PROOF:OSC:LEMMA2:PROP3}--\ref{PROOF:OSC:LEMMA2:PROP5}. 
        This completes the construction step.
    \end{proof}
        \noindent\textit{Proof of Lemma~{\ref{PROOF:OSC:LEMMA1}} (continued).}
        We now use the sequences constructed in \Autoref{PROOF:OSC:LEMMA2}. First, we write $h_k$ as a telescopic sum. Fix any $m \in \NN$. Then, for all $k > m$, we have
        \begin{equation*}
            h_k = h_m +  \sum_{j = m}^{k-1} \left( h_{j + 1} - h_{j} \right).
        \end{equation*}
       Property~\ref{PROOF:OSC:LEMMA2:PROP3} yields
        \begin{equation*}
            \sum_{j = m}^{\infty} \max_{z\in H_{m}}\lvert h_{j + 1}(z) - h_{j}(z) \rvert \leq \sum_{j = m}^{\infty} \max_{z\in H_{j+1}}\lvert h_{j + 1}(z) - h_{j}(z) \rvert < \sum_{j = m}^{\infty} \frac{1}{2^{j+2}} < \infty.
        \end{equation*}
        Therefore, the telescopic sum forms a Cauchy sequence on $H_m$ and so it has a uniform limit function $\lim_{k \to \infty} h_k = h_m + \sum_{j = m}^{\infty} (h_{j + 1} - h_{j})$ on $H_m$. 

        Since the compact sets $H_k$ are an increasing sequence exhausting $\CC$, thanks to Weierstraß's Theorem~\ref{theorem:weier}, we obtain an entire limit function $h = \lim_{k\to \infty}h_k$. Moreover,   for any $k \in \NN$, the limit function satisfies
        \begin{equation}\label{PROOF:OSC:LEMMA1:EQ4}
            \begin{split}
                \max_{z \in H_{k+1}}\lvert h(z) - h_k(z) \rvert &\leq \ \sum_{j = k}^{\infty} \ \max_{z\in H_{j+1}}\lvert h_{j + 1}(z) - h_{j}(z) \rvert 
                \leq \sum_{j = k+1}^{\infty} \frac{1}{2^{j+1}} \min\lbrace \delta_0, \dots, \delta_j \rbrace \\
                &\leq \sum_{j = k+1}^{\infty} \frac{1}{2^{j+1}} \delta_{k+1} < \delta_{k+1}.
            \end{split}
        \end{equation}

       Our construction of $h$ ensures that the map $F_h(z,w) = (f(z) + wh(z), g(w))$, together with the sequences $(w_k)_{k\in\NN}$ and $(\widetilde{w}_k)_{k\in\NN}$, satisfies the assertions of \Autoref{PROOF:OSC:LEMMA1}.

        In fact, by \Autoref{PROOF:OSC:LEMMA2} property~\ref{PROOF:OSC:LEMMA2:PROP1}, both $(w_k)_{k\in\NN}$ and $(\widetilde{w}_k)_{k\in\NN}$ are sequences of nonzero complex with $\lim_{k\to\infty} w_k = \lim_{k\to\infty} \widetilde{w}_k = 0$. 

        Next, fix any $k\in\NN$. Thanks to the definitions of $\Delta_k$ and $\widetilde{\Delta}_k$, equation~\eqref{PROOF:OSC:LEMMA1:EQ4} for $k - 1$, and property~\ref{PROOF:OSC:LEMMA2:PROP6}, $h$ is sufficiently close to the recursively defined function $h_k$ to satisfy both $ \pi_z F_h^{n_k}(z_0, w_k) \in \Delta_k$ and $ \pi_z F_h^{n_k}(z_0, \widetilde{w}_k) \in \widetilde{\Delta}_k$. Thus, we can write
        \begin{equation*}
            \label{PROOF:OSC:LEMMA2:CONC_EQ1}
            \begin{split}
                |\pi_z F_h^{n_k+1}(z_0, w_k) &- (k+1)| \\
                &= \lvert f(\pi_z F_h^{n_k}(z_0, w_k)) - (k+1) + g^{n_k}(w_k)h(\pi_z F_h^{n_k}(z_0, w_k)) \rvert \\
                &\leq \left\lvert f(\pi_z F_h^{n_k}(z_0, w_k)) - f(\pi_z F_{h_{k-1}}^{n_k}(z_0, w_k)) \right\rvert \\
                & \qquad\qquad + \left\lvert g^{n_k}(w_k)h(\pi_z F_h^{n_k}(z_0, w_k)) + f(\pi_z F_{h_{k-1}}^{n_k}(z_0, w_k)) - (k+1) \right\rvert \\
                &\leq \frac{1}{3} + \left\lvert g^{n_k}(w_k) \right\rvert\left\lvert h(\pi_z F_h^{n_k}(z_0, w_k)) + \frac{f(\pi_z F_{h_{k-1}}^{n_k}(z_0, w_k)) - (k+1)}{\left\lvert g^{n_k}(w_k) \right\rvert} \right\rvert \\
                &\leq \frac{1}{3} + \max_{z \in \Delta_k}\left\lvert h(z) + \frac{f(\pi_z F_{h_{k-1}}^{n_k}(z_0, w_k)) - (k+1)}{g^{n_k}(w_k)}  \right\rvert \\
                &\leq \frac{1}{3} + \max_{z \in \Delta_k}\left\lvert h_{k}(z) + \frac{f(\pi_z F_{h_{k-1}}^{n_k}(z_0, w_k)) - (k+1)}{g^{n_k}(w_k)}  \right\rvert+ \max_{z \in \Delta_k}\left\lvert h(z) - h_k(z) \right\rvert \\
                &\leq \frac{1}{3} + \frac{1}{3} + \max_{z \in H_{k+1}}\left\lvert h(z) - h_k(z) \right\rvert \leq \frac{1}{3} + \frac{1}{3} + \delta_{k+1} < 1
            \end{split}
        \end{equation*}
        where we used $\pi_z F_h^{n_k}(z_0, w_k) \in \Delta_k$ and \eqref{PROOF:OSC:LEMMA1:EQ4} together with properties~\ref{PROOF:OSC:LEMMA2:PROP1}, \ref{PROOF:OSC:LEMMA2:PROP4}, and \ref{PROOF:OSC:LEMMA2:PROP7}. 
        We deduce that $\pi_z F_h^{n_k+1}(z_0, w_k) \in D(k + 1, 1)$ and in particular,
        \begin{equation*}
            \lvert \pi_z F_h^{n_k+1}(z_0, w_k) \rvert > k.
        \end{equation*}
        Arguing analogously for $|\pi_z F_h^{n_k+1}(z_0, \widetilde{w}_k)|$ and applying \ref{PROOF:OSC:LEMMA2:PROP5} instead of \ref{PROOF:OSC:LEMMA2:PROP4}, we obtain
        \begin{equation*}
            \begin{split}
                \lvert \pi_z F_h^{n_k+1}(z_0, \widetilde{w}_k) \rvert < 1,
            \end{split}
        \end{equation*}
        concluding the proof of \Autoref{PROOF:OSC:LEMMA1}.
    \end{proof}

\begin{remark}
    Note that a priori, the entire function $h$ may be transcendental, and it is reasonable to ask whether for polynomial skew-products, the basin of attraction at infinity always bulges. 
    However, the following example shows that the basin at infinity does not need to bulge for polynomial perturbations.
\end{remark}

\begin{example}\label{EXAMPLE:basininfinity}
    The basin at infinity of the polynomial skew-product $F\colon\CC^2\to\CC^2$
    \begin{equation*}
        F(z,w) = (z^2 - wz^3, \lambda w),
    \end{equation*}
    is non-bulging for any real number $ 0 < \lambda < 1 $.
    Indeed, let $x_0 > 0$ be such that $\lambda x_0 > 1$ and $\lambda^2 x_0 > 1$. 
    In particular, since $x_0 > 1$, the point lies in the basin at infinity of $z \mapsto z^2$. 
   Since $F$ preserves $\RR^2$, it suffices to show that there are points arbitrarily close to $(x_0,0)$ whose orbits are bounded and hence cannot diverge to infinity. Therefore, the basin at infinity cannot bulge.

    We first show that for every $\delta > 0$ there exists $0 < y_0 < \delta$ and $n_0 \in \NN$ such that $x_{n_0} > 0$ and $x_{n_0+1} \leq 0$, where $(x_{n_0},y_{n_0}) := F^{n_0}(x_0,y_0)$. 

    Let $\delta > 0$.
    By our choice of $x_0$, we have  $\lim_{n\to\infty} ( \lambda^2 x_0 )^{2^n} = \infty$ and so there exists $N \in \NN$ with
    \begin{equation}\label{eq:polynomial:1}
        \frac{1}{( \lambda^2 x_0 )^{2^{n-1}}} < 1 - \lambda
    \end{equation}
    for all $n \geq N$. Fix $n_0 \geq N$ large enough so that
    \begin{equation*}
        y_0 := \frac{1}{ (\lambda x_0)^{2^{n_0}} \lambda^{n_0-1}} < \delta.
    \end{equation*}
    Using \eqref{eq:polynomial:1} and the definition of $y_0$, we deduce
    \begin{equation*}
        x_0^{2^{n_0-1}}\lambda^{n_0-1} y_0 = \frac{1}{( \lambda^2 x_0 )^{2^{n_0-1}}} < 1 - \lambda < 1 = \frac{y_0}{y_0} = y_0 (\lambda x_0)^{2^{n_0}} \lambda^{n_0-1}.
    \end{equation*}
    Let $k \leq n_0-1$. Observe that $y_k = \lambda^k y_0$ and furthermore, if $x_0, \dots, x_k > 0$, then $x_k \leq x_0^{2^k}$. If one of $x_0, \dots, x_{n_0-1}$ is already nonpositive, then we are done. Thus, we may assume that all $x_0, \dots, x_{n_0-1}$ are positive.
    For $k \leq n_0-1$, we compute
    \begin{equation*}
        x_k y_k \leq x_0^{2^k} \lambda^k y_0 \leq x_0^{2^{n_0-1}} \lambda^{n_0-1} y_0 < 1 - \lambda
    \end{equation*}
    where we used that $x_0 \lambda > 1$. Thus, we have $1 - x_k y_k > \lambda$, and therefore
    \begin{equation*}
        x_{k+1} = x_k^2 (1 - x_k y_k) \geq x_k^2 \lambda.
    \end{equation*}
    Induction over $k$ yields
    \begin{equation*}
        x_{k+1} \geq x_0^{2^{k+1}} \lambda^{2^{k+1} - 1}
    \end{equation*}
    for all $k \leq n_0-1$. For $k = n_0 - 1$, we obtain
    \begin{equation*}
        x_{n_0}y_{n_0} \geq x_0^{2^{n_0}} \lambda^{2^{n_0} - 1} \lambda^{n_0} y_0 = y_0 (\lambda x_0)^{2^{n_0}} \lambda^{n_0-1} = 1
    \end{equation*}
    which implies $1 - x_{n_0}y_{n_0} \leq 0 $ and thus 
    \begin{equation*}
        x_{n_0+1} = x_{n_0}^2 (1 - x_{n_0}y_{n_0}) \leq 0.
    \end{equation*}
    Now that we have proven the existence of such $y_0$, the Intermediate Value Theorem implies the existence of $0 \leq \widetilde{y}_0 \leq y_0$ such that $\pi_z F^{n}(x_0,\widetilde{y}_0) = 0$. 
    Since this holds for arbitrarily small $y_0$ and since the basin of attraction at $0$ is bulging (as shown in \cite{RosayRudin1988} and \cite{Lilov_2004}), the orbits of $(x_0, \widetilde{y}_0)$ get caught in the bulged basin of attraction at $0$ for $y_0$ small enough, and thus stay bounded.
\end{example}


\section{Bulging Wandering Domains}\label{SECTION:3}

Having constructed examples where bulging fails, we now turn to the opposite question and investigate conditions under which Fatou components do bulge.

If the skew-product has an attracting invariant fiber where the one-dimensional dynamics is given by a function as in \Autoref{LEMMA:WDExamplesin1D}, the following theorem provides a sufficient condition on the perturbation for the wandering domains to bulge.

\begin{theorem}\label{THEOREM:BULGINGORDER}
    Consider the holomorphic skew-product $F\colon\CC^2\to\CC^2$
    \begin{equation}\label{eq:BULGINGORDER:Skew}
        F(z,w) = (f(z) + w h(z,w), g(w)) := (z + T + p(z) + w h(z,w), g(w))
    \end{equation}
    where $p$ is an entire $T$-periodic function for some $T \in \CC^{*}$, the function $z + p(z)$ has an attracting fixed point at $z_0 \in \CC$, $g$ is entire with an attracting fixed point at $0$, and $h$ is entire.  Assume that there exists $\delta > 0$ with
    \begin{equation}\label{THEOREM:BULGINGORDER:EQ1}
        \sum_{k = 0}^{\infty} \rho_k \max_{\substack{\lvert w \rvert \leq \rho_k, \\ \lvert z - (z_0 + kT) \rvert \leq \delta}} \lvert h(z,w) \rvert < \infty
    \end{equation}
    where $(\rho_k)_{k\in\NN}$ is a sequence of positive real numbers 
    such that $g(\overline{D(0, \rho_k)}) \subset \overline{D(0, \rho_{k+1})}$ holds for every $k \in \NN$. Then the wandering domain $U_0$ of $f$ at $z_0$ bulges to a Fatou component of $F$.
\end{theorem}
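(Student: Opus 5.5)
We will show that for every point $(a,0)$ with $a \in U_0$ there is a neighborhood $V$ of $(a,0)$ on which the iterates $F^n$ satisfy $F^n(z,w) - (nT,0) \to (\Psi(z,w),0)$ locally uniformly. Here $\Psi$ is a holomorphic map, a perturbation of the limit of $(z+p)^n$. Once this is established, the iterates diverge locally uniformly to infinity on $V$, because $nT \to \infty$. This gives normality with respect to $\Chat{2}$. It also gives normality for $\PC{2}$, since the image directions converge, and for $(\Chat{})^2$, since the first coordinate tends to $\infty$ and the second to $0$. So $U_0 \times \{0\}$ lies in the Fatou set in every sense.

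The starting observation is conjugation by translation. Write $z_n = u_n + nT$, where $(z_n,w_n)=F^n(z,w)$. By $T$-periodicity of $p$ we get
\begin{equation*}
u_{n+1} = u_n + p(u_n) + w_n h(u_n + nT, w_n) =: q(u_n) + \varepsilon_n ,
\end{equation*}
with $q(u)=u+p(u)$. Thus $u_n$ is an orbit of the fixed map $q$, which has an attracting fixed point $z_0$ and basin $U_0$, perturbed by errors $\varepsilon_n$. Condition \eqref{THEOREM:BULGINGORDER:EQ1} is designed to make $\sum_n \varepsilon_n$ summable as long as $|w_n|\le\rho_n$ and $|u_n - z_0|\le\delta$.

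We first treat points near $z_0$. Shrinking $\delta$ is allowed, because the maximum in \eqref{THEOREM:BULGINGORDER:EQ1} decreases. Choose $0<\delta'<\delta$ and $0<\lambda<1$ with $|q(u)-z_0|\le\lambda|u-z_0|$ on $\overline{D(z_0,\delta')}$, which is possible since $z_0$ is attracting. Choose $N$ with $\sum_{k\ge N}\rho_k M_k < (1-\lambda)\delta'/2$, where $M_k$ denotes the maximum in \eqref{THEOREM:BULGINGORDER:EQ1}.

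The induction needs the starting time to be compatible with the radii $\rho_k$. We handle this by time-shifting, using the $N$-th iterate. For $(u,w)$ with $|u-z_0|<\delta'/2$ and $|w|\le\rho_N$, consider the orbit started at $(u+NT,w)$. By induction, $|w_{N+j}|\le\rho_{N+j}$ by the hypothesis $g(\overline{D(0,\rho_k)})\subset\overline{D(0,\rho_{k+1})}$. Also
\begin{equation*}
|u_{N+j+1}-z_0| \le \lambda |u_{N+j}-z_0| + \rho_{N+j}M_{N+j},
\end{equation*}
and this stays below $\delta'$ by the smallness of the tail.

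The errors are then uniformly summable, so $u_n$ converges uniformly to a holomorphic limit. Indeed, $|u_{n+1}-z_0|\le\lambda|u_n-z_0|+\varepsilon$-tail gives $u_n\to z_0$ uniformly. In particular $F^n(z,w) - (nT,0)$ converges uniformly on the open set $V_N = D(z_0+NT,\delta'/2)\times D(0,\rho_N)$. Note that $0$ is attracting for $g$ and $\rho_N>0$, so $w_n\to 0$ there, for $n$ large after possibly shrinking. Hence $V_N \subset \F(F)$.

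Next we return to arbitrary points of $U_0\times\{0\}$. Take $a\in U_0$. Since $f^m(a) = q^m(a)+mT$ and $q^m(a)\to z_0$, there is some $m\ge N$ with $f^m(a)\in D(z_0+mT,\delta'/2)$. Repeating the argument above with $N$ replaced by $m$ (tails only get smaller) gives an open neighborhood $V_m\ni F^m(a,0)$ contained in the Fatou set. By continuity, $F^{-m}(V_m)$ is an open neighborhood of $(a,0)$. By backward invariance, Lemma~\ref{LEMMA:BackwardInvariance}, this neighborhood lies in $\F(F)$. Therefore $U_0\times\{0\}$, which is connected, lies in one Fatou component.

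The main obstacle is the index bookkeeping. The hypothesis pairs $\rho_k$ with the $k$-th translate $z_0+kT$, so the orbit must start at time $m$ precisely at the $m$-th translate with $|w|\le\rho_m$, and the induction must keep $u_n$ inside the $\delta$-disc. The shifted-start argument above should handle this; it is the step to check carefully.
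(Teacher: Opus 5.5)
Your proposal is correct and follows essentially the paper's proof. The paper likewise works on $D(lT,\delta/2)\times D(0,\rho_l)$ for $l$ past the point where the tail of the series is below $\delta/2$, and shows by induction that $z_k$ stays within $\delta$ of $(l+k)T$, then concludes by backward invariance; it uses only $|b(z)|<|z|$ rather than your uniform contraction $\lambda$, so it gets divergence without your stronger limit $u_n\to z_0$. The step you leave vague, uniform convergence $w_n\to 0$, is handled in the paper by first shrinking the radii so that $\rho_k\to 0$ (for instance replacing $\rho_k$ by $\min\{\rho_k,\alpha^k r\}$), which has the same effect as your shrinking of the $w$-disc.
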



\begin{proof}
    Let $U_0$ be the wandering domain of $f$ around $z_0$, the properties of which are described in \Autoref{LEMMA:WDExamplesin1D}. 
    We show that $U_0$ bulges.


    Up to applying the global conjugation $(z,w) \mapsto (z + z_0, w)$, we may assume that $z_0 = 0$. 
    Thus, $0$ is an attracting fixed point of $ b(z) := z + p(z)$. 
    Let $ \delta > 0 $ be given by the hypotheses of the statement. 
    Up to shrinking it, we may assume that $\lvert b(z) \rvert < \lvert z \rvert$ for all $z \in D(0,\delta)$. 
    Due to the definition of $f$ and the $T$-periodicity of $p$, we obtain
    \begin{equation}\label{THEOREM:BULGINGORDER:PROOF:Attraction}
        \begin{split}
            \lvert f(z) - (k+1)T \rvert &= \lvert  z + p(z) + T - (k+1)T  \rvert = \lvert (z-kT) + p(z-kT) \rvert \\
            &= \lvert b(z-kT) \rvert < \lvert z - kT \rvert
        \end{split}
    \end{equation}
    for all $z \in D(kT, \delta)$ and $k \in\NN$. 

    Furthermore, since $0$ is an attracting fixed point of $g$, we may assume, by taking smaller values if necessary, that $\rho_k$ converges to $0$ as $k \to \infty$.

    The following lemma provides the key step of the proof. 
    \begin{lemma}\label{THEOREM:BULGINGORDER:PROOF:SUBLEMMA1}
        Under the hypotheses of \Autoref{THEOREM:BULGINGORDER} with $\lim_{k\to\infty}\rho_k=0$, there exists $L \in \NN$ such that for every $l \geq L$, as $k \to \infty$, both $z_k = \pi_z F^k(z,w)$ diverges uniformly to infinity and $w_k = \pi_w F^k(z,w)$ tends uniformly to $0$ on $ D(lT, \frac{\delta}{2}) \times D(0,\rho_l) $.
    \end{lemma}
    Depending on our choice of compactification, this lemma yields that $F^{n}(z,w)$ converges uniformly on $D\left( lT, \frac{\delta}{2} \right) \times D(0,\rho_l)$ to $\infty \in \Chat{2}$, $[1:0:0] \in\PC{2}$, or $(\infty,0) \in (\Chat{})^2$, respectively, as $n \to \infty$.
    Therefore, we have
    \begin{equation}\label{THEOREM:BULGINGORDER:EQ2}
        D\left( lT, \frac{\delta}{2} \right) \times D(0,\rho_l) \subset \F(F)
    \end{equation}
    for all $l \geq L$.
    To complete the proof, consider any $z \in U_0$. 
    Since $0$ is an attracting fixed point of the function $b(z) = z + p(z)$, there exists $N \geq L$ such that $b^N(z) \in D\left( 0,\frac{\delta}{2} \right)$.
    Consequently, we obtain for the orbit under $F$ that
    \begin{equation*}
        F^N(z,0) = (f^N(z), 0) = (b^N(z) + NT, 0) \in D\left( NT,\frac{\delta}{2} \right) \times \lbrace 0 \rbrace \subset \F(F)
    \end{equation*}
    where we used \eqref{THEOREM:BULGINGORDER:EQ2} in the last step.
    Using the backward invariance of the Fatou set, established in \Autoref{LEMMA:BackwardInvariance}, we deduce that $(z,0)$ belongs to the Fatou set of $F$, which completes the proof of Theorem~{\ref{THEOREM:BULGINGORDER}}.
\end{proof}

\begin{proof}[Proof of Lemma~{\ref{THEOREM:BULGINGORDER:PROOF:SUBLEMMA1}}]
    First, observe that the convergence of the series in \eqref{THEOREM:BULGINGORDER:EQ1} yields an integer $L \in \NN$ such that
    \begin{equation*}
        \sum_{k = L}^{\infty} \rho_k \max_{\substack{\lvert \xi \rvert \leq \rho_k, \\ \lvert \zeta - kT \rvert \leq \delta}} \lvert h(\zeta,\xi) \rvert < \frac{\delta}{2}.
    \end{equation*}
    Let $l\in\NN$ be such that $l \geq L$. 
    The uniform convergence of $w_k$ to $0$ in a small neighborhood of $0$ follows directly from the fact that $0$ is a attracting fixed point of $g$.
    By our assumption that $\rho_k$ tends to $0$ as $k \to \infty$, this holds in particular on $D(0,\rho_l)$.

    Inductively, we prove for all $(z,w) \in D\left( lT, \frac{\delta}{2} \right) \times D(0,\rho_l) $ and all $k \in \NN$ that
    \begin{equation}\label{THEOREM:BULGINGORDER:PROOF:NewInduction1}
        \left\lvert  z_k - (l+k)T  \right\rvert < \sum_{j=l}^{l + k-1} \rho_j \max_{\substack{\lvert \xi \rvert \leq \rho_j, \\ \lvert \zeta - jT \rvert \leq \delta}} \lvert h(\zeta,\xi) \rvert + \frac{\delta}{2}
    \end{equation}
    where $z_k = \pi_z F^k(z,w)$.
    Here, empty sums are interpreted as $0$.
    This already implies that $z_k $ diverges uniformly to $\infty$ on $ D\left( lT, \frac{\delta}{2} \right) \times D(0,\rho_l) $ as $k \to \infty$, since
    \begin{equation*}
        \begin{split}
            \sum_{j=l}^{l + k-1} \rho_j \max_{\substack{\lvert \xi \rvert \leq \rho_j, \\ \lvert \zeta - jT \rvert \leq \delta}} \lvert h(\zeta,\xi) \rvert + \frac{\delta}{2}            &\leq \sum_{j = L}^{\infty} \rho_j \max_{\substack{\lvert \xi \rvert \leq \rho_j, \\ \lvert \zeta - jT \rvert \leq \delta}} \lvert h(\zeta,\xi) \rvert  + \frac{\delta}{2} < \delta
        \end{split}
    \end{equation*}
    and $(l+k)T$ diverges to $\infty$ as $k \to \infty$.

    Let $z \in D\left( lT, \frac{\delta}{2} \right)$ and $w \in D(0,\rho_l)$. 
    The base case $k = 0$ of the induction is evident. 
    Assume that \eqref{THEOREM:BULGINGORDER:PROOF:NewInduction1} holds for some $k\in\NN$. 
    In particular, it follows that $z_k \in D((l+k)T, \delta)$.
    Furthermore, by our hypotheses on the sequence $(\rho_k)_{k\in\NN}$, we have $\lvert w_k \rvert = \lvert g^{k}(w) \rvert \leq \rho_{l+k}$. 
    As a consequence, we may apply \eqref{THEOREM:BULGINGORDER:PROOF:Attraction} and obtain
    \begin{equation*}
        \begin{split}
            | z_{k+1} - (l+k + 1)T |            &= \lvert  f(z_{k}) + w_{k} h(z_{k}, w_{k}) - (l+k+1)T \rvert \\
            &\leq \lvert  f(z_{k}) - (l+k+1)T \rvert + \lvert w_{k} h(z_{k}, w_{k}) \rvert \\
            &\leq \lvert  z_{k} - (l+k)T \rvert + \lvert w_{k} \rvert \lvert h(z_{k}, w_{k}) \rvert\\
            &\leq \lvert  z_{k} - (l+k)T \rvert + \rho_{l+k} \max_{\substack{\lvert \xi \rvert \leq \rho_{l + k}, \\ \lvert \zeta - (l+k)T \rvert \leq \delta}} \lvert h(\zeta,\xi) \rvert \\
            &< \sum_{j=l}^{l + k} \rho_{j} \max_{\substack{\lvert \xi \rvert \leq \rho_j, \\ \lvert \zeta - jT \rvert \leq \delta}} \lvert h(\zeta,\xi) \rvert + \frac{\delta}{2}\,.
        \end{split}
    \end{equation*}
    This completes the induction and therefore the proof of the lemma.
\end{proof}

A more readily verifiable, though not equivalent, condition for \eqref{THEOREM:BULGINGORDER:EQ1} can be given using the order of an entire function in one variable. We need the following definition.

\begin{definition}[cf. {\cite[Sections 1.1 and 1.2]{Ronkin1989}}]\label{DEF:Order}
    Let $f\colon \CC^2 \to \CC$ be entire and nonconstant. The \emph{maximum modulus} of $f$ at radii $r_1, r_2 >0$ is defined by
    \begin{equation*}
        M(r_1, r_2; f) := \max_{\lvert z \rvert \leq r_1, \lvert w \rvert \leq r_2} \lvert f(z, w) \rvert.
    \end{equation*}
    The \emph{order of $f$} is defined as
    \begin{equation*}
        \rho(f) := \limsup_{r\to\infty} \frac{\log^{+} \log M(r, r; f)}{\log r} \geq 0.
    \end{equation*}
    The \emph{order of $f$ in the $z$ variable} is defined as
    \begin{equation*}
        \rho_1(f) := \limsup_{r_1\to\infty} \frac{\log^{+} \log M(r_1, r_2; f)}{\log r} \geq 0
    \end{equation*}
    for some $r_2 > 0$.
\end{definition}
\begin{remark}
    As discussed in \cite[Section 1.2]{Ronkin1989}, the definition of $\rho_1(f)$ is independent of the choice of $r_2 > 0$.
\end{remark}

The following corollary gives a more practical criterion in terms of the growth of the perturbation.

\begin{corollary}\label{COROLLARY:GROWTH}
    Let $F\colon\CC^2\to\CC^2$ be a skew-product as in \eqref{eq:BULGINGORDER:Skew}.
    The wandering domain $U_0$ of $f$ at $z_0$ bulges to a Fatou component of $F$ if
    \begin{enumerate}
        \item $0$ is a geometrically attracting fixed point of $g$ and $\rho_1(h) < 1$, or
        \item $0$ is a superattracting fixed point of $g$ and $\rho_1(h) < \infty$.
    \end{enumerate}
\end{corollary}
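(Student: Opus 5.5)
The plan is to reduce both cases to Theorem~\ref{THEOREM:BULGINGORDER}. In each case we exhibit a sequence $(\rho_k)_{k\in\NN}$ with $g(\overline{D(0,\rho_k)}) \subset \overline{D(0,\rho_{k+1})}$ for which the series \eqref{THEOREM:BULGINGORDER:EQ1} converges. Fix $\delta>0$. Since $|z_0+kT| \le |z_0| + k|T|$, the maximum in \eqref{THEOREM:BULGINGORDER:EQ1} is bounded by $M(r_k, \rho_k; h)$ with $r_k := |z_0| + k|T| + \delta$. Once $\rho_k \le \rho_0$, this is in turn at most $M(r_k,\rho_0;h)$.

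Now use the order in $z$. By the remark following Definition~\ref{DEF:Order}, $\rho_1(h)$ does not depend on $r_2$, so we may take $r_2=\rho_0$. Choose $\sigma$ with $\rho_1(h)<\sigma$, where $\sigma<1$ in case (1) and $\sigma<\infty$ is arbitrary in case (2). Then for all large $k$ we have $M(r_k,\rho_0;h) \le \exp(r_k^{\sigma}) \le \exp(C k^{\sigma})$ for some constant $C>0$. It therefore suffices to arrange that $\sum_k \rho_k \exp(Ck^\sigma) < \infty$.

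\emph{Case (1).} Here $0<|g'(0)|<1$. Pick $\lambda$ with $|g'(0)|<\lambda<1$. There is $\rho_0>0$ such that $|g(w)| \le \lambda |w|$ on $\overline{D(0,\rho_0)}$. Setting $\rho_k := \lambda^k \rho_0$ satisfies the invariance condition, since the disks shrink and stay inside $\overline{D(0,\rho_0)}$. The series becomes $\rho_0 \sum_k \exp(Ck^\sigma + k\log\lambda)$. Because $\sigma<1$ and $\log\lambda<0$, the exponent is eventually below $\tfrac12 k\log\lambda$, so the series converges geometrically. This is exactly where $\rho_1(h)<1$ is needed: the exponent $\sigma$ must beat the linear decay $k\log\lambda$.

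\emph{Case (2).} If $0$ is superattracting, then $g(w)=a w^m(1+o(1))$ with $m\ge 2$, unless $g\equiv 0$ near $0$, in which case $g\equiv 0$ and the condition is trivial with any summable $\rho_k$. Then $|g(w)| \le A|w|^m$ on some small disk. Choose $\rho_0$ small enough that $A\rho_0^{m-1}\le \tfrac12$, and set $\rho_{k+1} := A\rho_k^m$. Induction gives $\rho_k \le \rho_0 \le$ the radius of that disk, together with the invariance condition. Writing $\rho_k = A^{-1/(m-1)} t_k$, we get $t_{k+1}=t_k^m$, so $t_k = t_0^{m^k}$ with $t_0<1$. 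Hence $\rho_k \le c\,\exp(-\beta m^k)$ for some $\beta>0$. This doubly exponential decay dominates $\exp(Ck^\sigma)$ for any finite $\sigma$, so the series converges.

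In both cases Theorem~\ref{THEOREM:BULGINGORDER} applies and yields bulging of $U_0$. There are only two delicate points. The first is to justify the bound $\max \le M(r_k,\rho_0;h)\le \exp(r_k^\sigma)$ with a \emph{fixed} second radius; this is where the independence of $\rho_1$ from $r_2$ is used. The second is the degenerate subcase $g\equiv 0$ in case (2), which is handled as noted above.
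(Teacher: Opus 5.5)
Your proposal is correct and follows essentially the same route as the paper: reduce to Theorem~\ref{THEOREM:BULGINGORDER} by bounding the maximum by $M(r_k,\rho_0;h)\le \exp(r_k^{\sigma})$ with a fixed second radius. You then take $\rho_k=\lambda^k\rho_0$ in case (1) and the doubly exponentially decaying recursion $\rho_{k+1}=A\rho_k^m$ in case (2). One small slip: for $g\equiv 0$, summability of $(\rho_k)$ alone is not enough (you need $\sum_k \rho_k M(r_k,\rho_0;h)<\infty$), but your general recursion already covers this case, since $\lvert g(w)\rvert\le A\lvert w\rvert^m$ then holds trivially.
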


\begin{proof}
    This corollary can be proven by itself using the same approach as \Autoref{THEOREM:BULGINGORDER}, but it also suffices to verify that the hypotheses of \Autoref{THEOREM:BULGINGORDER} hold. 
    As in the previous proof, we may assume that $z_0 = 0$. Let $\delta > 0$.

    Before looking at the two cases separately, note that $k\lvert T \rvert + \delta < C (k+1)$ with  $C := \lvert T \rvert + \delta$ for every $k \in \NN$. In particular, we have
    \begin{equation}\label{THEOREM:BULGINGORDER:PROOF:ConstantC}
        (k\lvert T \rvert + \delta)^{\beta} < C^{\beta} (k+1)^{\beta}
    \end{equation}
    for every $\beta > 0$ and $k \in \NN$.
    
    We start with the case $\rho_1(h) < 1$.
    Since $0$ is an attracting fixed point of $g$, there exist $0 < \alpha < 1$ and $\delta_g > 0$ such that $\lvert g(w) \rvert \leq \alpha \lvert w \rvert$ for all $w \in \overline{D(0,\delta_g)}$. 
    Observe, that this already implies that $w_n = g^n(w)$ converges to $0$ uniformly on $D(0,\delta_g)$ as $n\to\infty$. 
    In particular, choosing $\rho_k := \alpha^k \delta_g$ for $ k \in \NN $, we obtain $\lim_{k\to\infty}\rho_k = 0$ and $g(\overline{D(0, \rho_k)}) \subset \overline{D(0, \rho_{k+1})}$.

    Since $\rho_1(h) < 1$, we can choose $\tau > 0$ such that $\beta := \rho_1(h) + \tau < 1$. 
    By definition, there exists $L > 0$ such that for all $\sigma \geq L$ we have $\log^{+} \log M(\sigma, \delta_g; h) \leq \beta \log \sigma $ and so, $M(\sigma, \delta_g; h) \leq e^{\sigma^{\beta}}$.
    Since $0 < \beta < 1$ and $\log \alpha < 0$, there exists $K \in \N$ large enough so that for all $k \geq K$, we have $\log \alpha + C^{\beta} \frac{(k+1)^\beta}{k} < 0$
    which implies that
    \begin{equation*}
        q := e^{\log \alpha + C^{\beta} \frac{(K+1)^\beta}{K}} < 1.
    \end{equation*}
    Up to further increasing $K$, we may assume that $k\lvert T \rvert + \delta \geq L$ for all $k \geq K$.
    Together with \eqref{THEOREM:BULGINGORDER:PROOF:ConstantC}, this gives
    \begin{equation*}
        \begin{alignedat}{2}
            \sum_{k=K}^{\infty} \alpha^k e^{( k\lvert T \rvert + \delta )^{\beta}} &= \sum_{k=K}^{\infty} e^{k \log \alpha + ( k\lvert T \rvert + \delta )^{\beta}}&&\leq \sum_{k=K}^{\infty} e^{k \log \alpha + C^{\beta}(k+1)^\beta} \\
            &\leq \sum_{k=K}^{\infty} e^{k \left( \log \alpha + C^{\beta}\frac{(k+1)^\beta}{k} \right)} &&\leq \sum_{k=K}^{\infty} q^k < \infty.
        \end{alignedat}
    \end{equation*}
    Hence we obtain
    \begin{equation*}
        \begin{split}
            \sum_{k = 0}^{\infty} \rho_k \max_{\substack{\lvert w \rvert \leq \rho_k, \\ \lvert z - kT \rvert \leq \delta}} \lvert h(z,w) \rvert &\leq \sum_{k = 0}^{\infty} \delta_g \alpha^k M(k\lvert T \rvert + \delta, \delta_g; h) \\
            &\leq  \delta_g \sum_{k = 0}^{K-1} \alpha^k M(k\lvert T \rvert + \delta, \delta_g; h) + \delta_g\sum_{k = K}^{\infty} \alpha^k e^{( k\lvert T \rvert + \delta )^{\beta}} < \infty
        \end{split}
    \end{equation*}
    completing the proof when $\rho_1(h) < 1$.

    We now turn to the second case, that is, we assume that $0$ is super-attracting for $g$ and $\rho_1(h) < \infty$. 
    Hence, we have $g'(0) = \dots = g^{(d-1)}(0) = 0$ and $g^{(d)}(0) \neq 0$ for some $d \geq 2$. 
    As a result, there exists $\widetilde{g}$ entire such that $g(w) = w^d \widetilde{g}(w)$ for all $w \in \CC$. 
    For $\delta_g > 0$, define
    \begin{equation*}
        C_g := \max_{\lvert w \rvert \leq \delta_g} \lvert \widetilde{g}(w) \rvert > 0.
    \end{equation*}
    Up to shrinking $\delta_g$, we may assume that $\delta_g^d C_g \leq \delta_g$ and $\lvert g(w) \rvert \leq \lvert w \rvert$ for all $w \in \overline{D(0,\delta_g)}$.
    This ensures that for all $w \in D(0,\delta_g)$, we have
    \begin{equation}\label{THEOREM:BULGINGORDER:PROOF:WIterates}
        \lvert w_k \rvert = \lvert g^k(w) \rvert \leq C_g^{1 + d + \dots + d^{k-1}} \lvert w \rvert^{d^k} = C_g^{\frac{1 - d^k}{1 - d}} \lvert w \rvert^{d^k}.
    \end{equation}
    Set
    \begin{equation*}
        D := \frac{1}{1-d} \log C_g.
    \end{equation*}
    As before, but this time for $\tau = 1$ and $\beta := \rho_1(h) + 1$, there exists $L > 0$ such that for all $\sigma \geq L$, we have $\log^{+} \log M(\sigma, \delta_g; h) \leq \beta \log \sigma $, and so $M(\sigma, \delta_g; h) \leq e^{\sigma^{\beta}}$.    Given any $0 < q < 1$, we can choose $K \in \N$ large enough and $t > 0$ small enough such that for all $k \geq K$, we have $\log t - D + C^{\beta} \frac{(k+1)^\beta}{d^k} < \log q < 0$ and hence,
    \begin{equation*}
        e^{\log t - D + C^{\beta} \frac{(k+1)^\beta}{d^k}} < q < 1
    \end{equation*}
    for all $k \geq K$.
    Up to further increasing $K$, we may assume that $k\lvert T \rvert + \delta \geq L$ for all $k \geq K$.
    Using \eqref{THEOREM:BULGINGORDER:PROOF:ConstantC}, we obtain
    \begin{equation}
        \label{eq:BULGINGORDER:COR:e1}
        \begin{alignedat}{2}
            \sum_{k=K}^{\infty} e^{d^k (\log t - D)} e^{( k\lvert T \rvert + \delta )^{\beta}} &= \sum_{k=K}^{\infty} e^{d^k (\log t - D) + ( k\lvert T \rvert + \delta )^{\beta}} &&\leq \sum_{k=K}^{\infty} e^{d^k (\log t - D) + C^{\beta}(k+1)^\beta} \\
            &\leq \sum_{k=K}^{\infty} e^{d^k \left( \log t - D + C^{\beta}\frac{(k+1)^\beta}{d^k} \right)} &&\leq \sum_{k=K}^{\infty} q^{d^k} < \infty.
        \end{alignedat}
    \end{equation}
    Up to shrinking $t$, we may assume that $t < \delta_g$. Setting $\rho_0 := t $ and $\rho_k := C_g \rho_{k-1}^d$, we have $g(\overline{D(0, \rho_k)}) \subset \overline{D(0, \rho_{k+1})}$ for all $k \in \NN$.
    By induction, it further follows that for all $k \in \NN$, we have
    \begin{equation*}
        \rho_k  = C_g^{\frac{1 - d^k}{1 - d}} t^{d^k} = e^{d^k (\log t - D) + D}.
    \end{equation*}
    In particular, we obtain $\lim_{k\to\infty} \rho_k $ = 0.
    Using the explicit formula for $\rho_k$ together with \eqref{THEOREM:BULGINGORDER:PROOF:ConstantC} and \eqref{eq:BULGINGORDER:COR:e1}, and arguing similarly to the first case, we obtain that
    \begin{equation*}
        \begin{split}
            \sum_{k = 0}^{\infty} \rho_k \max_{\substack{\lvert w \rvert \leq \rho_k, \\ \lvert z - kT \rvert \leq \delta}} \lvert h(z,w) \rvert &\leq \sum_{k = 0}^{\infty} \rho_k M(k\lvert T \rvert + \delta, \delta_g; h) \\
            &\leq  \sum_{k = 0}^{K-1} \rho_k M(k\lvert T \rvert + \delta, \delta_g; h) + \sum_{k = K}^{\infty} C_g^{\frac{1 - d^k}{1 - d}} t^{d^k} e^{( k\lvert T \rvert + \delta )^{\beta}} \\
            &\leq  \sum_{k = 0}^{K-1} \rho_k M(k\lvert T \rvert + \delta, \delta_g; h) + C_g^{\frac{1}{1-d}} \sum_{k = K}^{\infty} e^{d^k (\log t - D)} e^{( k\lvert T \rvert + \delta )^{\beta}} < \infty,
        \end{split}
    \end{equation*}
    completing the proof for the second case.
\end{proof}

	\bibliographystyle{amsalpha}
	\bibliography{bibliography_wd}
\end{document}